\DeclareMathOperator{\Rel}{\sf Rel}
\DeclareMathOperator{\PT}{\sf PT}
\DeclareMathOperator{\C}{\mathcal C}
\DeclareMathOperator{\Se}{\mathcal S}
\begin{document}

\title{Ordered Ehresmann semigroups and categories}
\author{T. Stokes}

\date{}
\maketitle

\newcommand{\bea}{\begin{eqnarray*}}
\newcommand{\eea}{\end{eqnarray*}}

\newcommand{\ben}{\begin{enumerate}}
\newcommand{\een}{\end{enumerate}}

\newcommand{\bi}{\begin{itemize}}
\newcommand{\ei}{\end{itemize}}

\newenvironment{proof}{\noindent \textbf{Proof.}\hspace{.7em}}
                   {\hfill $\Box$
                    \vspace{10pt}}

\newcommand{\mc}{\mathcal}

\newcommand{\dom}{\mbox{dom}}
\newcommand{\ran}{\mbox{ran}}

\newtheorem{thm}{Theorem}[section]
\newtheorem{theorem}[thm]{Theorem}
\newtheorem{lem}[thm]{Lemma}
\newtheorem{pro}[thm]{Proposition}
\newtheorem{dfnpro}[thm]{Definition/Proposition}
\newtheorem{proposition}[thm]{Proposition}
\newtheorem{cor}[thm]{Corollary}
\newtheorem{conj}[thm]{Conjecture}
\newtheorem{corollary}[thm]{Corollary}
\newtheorem{eg}[thm]{Example}
\newtheorem{dfn}[thm]{Definition}

\begin{abstract}
Ehresmann semigroups may be viewed as biunary semigroups equipped with domain and range operations satisfying some equational laws.  Motivated by some of the main examples, we here define ordered Ehresmann semigroups, and consider their basic properties as well as special cases in which the order is algebraically definable.  In particular, one and two-sided restriction semigroups equipped with their natural orders are characterised within the class of ordered Ehresmann semigroups.  The main result is an ESN-style theorem for ordered Ehresmann semigroups with particular reference to the special cases. 
\end{abstract}

\noindent {\bf Keywords.} Ehresmann semigroup; ordered semigroup; ESN Theorem.
\medskip
 
\noindent {\bf 2020 Mathematics Subject Classification.} 20M50; 06F05
\medskip

\section{Introduction}

The Ehresmann-Nambooripad-Schein Theorem (``ESN Theorem") for inverse semigroups gives an isomorphism between the category of inverse semigroups and a suitable category of partial algebras.  The main significance of this result is that one can capture the entire semigroup by limited knowledge of the product: just the compatible products (defined in some way), certain products of idempotents and arbitary elements (``restriction" and ``corestriction"), and the product of idempotents with one-another (a semilattice operation); one also retains the natural order determined by the inverse semigroup.  The associated partial algebra is then a small category with additional order-theoretic structure, making it a so-called ``inductive groupoid", from which the original inverse semigroup may be reconstructed.  Moreover every inductive groupoid arises from an inverse semigroup in this way, the correspondence is one-to-one, and there is a suitable category isomorphism between the two classes.

Onbe of the most successful generalisations of this result to possibly non-regular semigroups is given by Lawson in \cite{law1}, applying to what the author calls Ehresmann semigroups.  These may be defined in various ways, the one adopted here being as biunary semigroups equipped with unary operations here denoted by $D,R$ that model domain and range in a certain sense.  Examples include (in increasing generality): inverse semigroups in which one defines $D(s)=ss'$ and $R(s)=s's$ for all $s$ (where $s'$ is the inverse of $s$), (two-sided) restriction semigroups, and semigroups embeddable in the semigroup $\Rel(X)$ of binary relations on some set $X$ under relational composition, in which  for all $\rho\in \Rel(X)$,
$$D(\rho)=\{(x,x)\mid x\in \dom(\rho)\},\ R(\rho)=\{(x,x)\mid x\in \ran(\rho)\}.$$
In an Ehresmann semigroup $S$, there is a distinguished semilattice of idempotents $D(S)$, consisting of all elements of the form $D(s)$ (equivalently, $R(s)$), and this replaces the semilattice of all idempotents used in the original ESN theorem.

In \cite{law1}, Lawson showed how to obtain a suitable type of small category from an Ehresmann semigroup $S$ that retains only those products  $st$ where $R(s)=D(t)$, retains $D$ and $R$, and retains not one but two partial orders determined by $S$.  Defining $s\leq_l t$ when $s=D(s)t$, and $s\leq_r t$ when $s=tR(s)$, both $\leq_l$ and $\leq_r$ are partial orders, each agreeing with the semilattice order on $D(S)$ when restricted to it.  One can use these two partial orders to characterise products such as $es,sf$ where $e,f\in D(S)$ are such that $e\leq D(s)$, $f\leq R(s)$ respectively: $es$ is the unique $t\in S$ such that $t\leq_l s$ and $D(t)=e$, and dually for $sf$.  One can then characterise precisely those enriched categories arising in this way: they have two partial orders satisfying certain conditions, and notions of restriction and corestriction defined via these partial orders, which allow products of the form $es,sf$ to be formed, and thence products of arbitrary category elements so that multiplication is everywere-defined.  Lawson characterised the class of small categories arising in this way and called them Ehresmann categories in \cite{law1}; he then proceeded to obtain a category isomorphism between Ehresmann semigroups (with morphisms being homomorphisms preserving $D$ and $R$) and Ehresmann categories (equipped with suitable morphisms).  

Much more recently,  in \cite{law2} Lawson took a different and rather simpler approach.  Instead of defining partial orders to augment the category structure determined by an Ehresmann semigroup $S$, he supplemented the category with a so-called biaction of the identities of the category on the arrows in general, thereby allowing semigroup products of the form $es,sf$ ($e,f\in D(S), s\in S$)  to be captured; in particular, products of the form $ef$ (where $e,f\in D(S)$) are captured, giving a semilattice structure on the projections.  Lawson characterised the categories with biaction arising in this way, calling them categories with Ehresmann biaction.  This approach requires less structure and complexity of axioms than that presented in \cite{law1}, dispensing with partial orders entirely, but comes at the expense of requiring all products of the form $es,se$ to be retained rather than only some (although it is not hard to slightly reformulate things so that one need only consider products of the form $es$ where $e\leq D(s)$ and dually).  A very similar approach was used by Fitzgerald and Kinyon in \cite{fitzkin}, where slightly greater generality was used: the distinguished semilattice is replaced by a band.

Lawson's approach to obtaining an ESN-style theorem for Ehresmann semigroups used in \cite{law2} may be far simpler than that used in \cite{law1}, but we believe the general ideas in \cite{law1} have much value, particularly if the Ehresmann semigroups one is interested in are themselves ordered.  In fact many natural examples of Ehresmann semigroups are ordered in a way compatible with the operations: inverse and restriction semigroups by their natural order, and $\Rel(X)$ by the order of set inclusion (which is not algebraically definable).  

When the Ehresmann semigroups under consideration are ordered, this has immediate ramifications for the the associated categories in any ESN-style theorem one seeks: such categories will necessarily be ordered.  One could proceed by attempting to ``bolt on" the additional order structure to either an Ehresmann category (which already has two partial orders) or a category with Ehresmann biaction (which has none).  However, we shall see that one can work directly with the order without requiring the additional structure of either an Ehresmann category or a category with Ehresmann biaction -- such additional structure may be derived but need not be assumed.  Special cases may be obtained corresponding to restriction semigroups, Ehresmann semigroups that are one-sided restriction semigroups, and their common generalisation, the so-called de Barros semigroups (defined in \cite{law1}).

In this work, we first review the details of the earlier work on Ehresmann semigroups and ESN-style theorems for them, especially as it appears in \cite{law1} and \cite{law2}.  We then define ordered Ehresmann semigroups to be those admitting a so-called Ehresmann order, giving examples and obtaining their basic properties.  It is shown that a given Ehresmann semigroup may admit more than one Ehresmann order or possibly none at all.  We consider special cases of Ehresmann semigroups in which the Ehresmann order may be algebraically definable, giving alternative characteristions of these classes as ordered Ehresmann semigroups.

The final section concerns our ``ESN Theorem".  We use a combination of some of the laws considered in \cite{law1}, together with a new law, to define the appropriate types of small categories with order, then obtain the hoped-for category isomorphism.  We then obtain special cases of the correspondence applying to the particular classes of Ehresmann semigroups with algebraically definable Ehresmann order considered in the previous section.

Throughout, all function and relation compositions will be written using the left-to-right convention, and functions will be written on the right of their arguments, aside from unary operations which will be written on the left ($D(s)$ rather than $sD$, and so on).

\section{Preliminaries}

\subsection{Localisable and Ehresmann semigroups}

We start with slightly greater generality than Ehresmann semigroups.  As in \cite{fitzkin}, a {\em localisable semigroup} is a biunary semigroup $(S,\cdot,D,R)$ satisfying the following laws:
\ben[label=\textup{(L\arabic*)}]
\item $D(s)s=s, sR(s)=s$;
\item $D(R(s))=R(s), R(D(s))=D(s)$;  \label{L2}
\item $D(st)=D(sD(t)), R(st)=R(R(s)t)$; \label{L3}
\item $D(D(s)D(t))=D(s)D(t)$.  \label{L4}
\een
These are not exactly the laws given in \cite{fitzkin}: the final law is different but follows from those given in \cite{fitzkin} since $D(S)=\{D(s)\mid s\in S\}=R(S)$ is a band in any localisable semigroup.  Conversely, the law $D(D(s)t)=D(s)D(t)$, appearing in \cite{fitzkin}, follows easily from the above laws since $D(D(s)t)=D(D(s)D(t))=D(s)D(t)$ using \ref{L2} and \ref{L4}; dually for the law $R(sR(t))=R(s)R(t)$.

A localisable semigroup is {\em Ehresmann} if it also satisfies 
\bi
\item $D(s)D(t)=D(t)D(s)$.
\ei

If $(S,\cdot,D,R)$ is localisable, we call elements of $D(S)=R(S)$ {\em projections}; they form a band under multiplication, which is a semilattice if (and only if) $(S,\cdot,D,R)$ is Ehresmann.  (Ehresmann semigroups were first defined in \cite{law1} simply as semigroups, with the unary operations secondary, but here we assume the unary operations are part of the signature throughout.)  

Examples of Ehresmann semigroups include any two-sided restriction semigroup, hence any inverse semigroup $(S,\cdot,')$ in which one defines $D(s)=ss'$ and $R(s)=s's$, where $s'$ is the inverse of $s\in S$.  A more generic example is $\Rel(X)$, the semigroup of binary relations on $X$ equipped with relational composition (written left-to-right), and unary operations $D$ and $R$, where 
$$D(\rho)=\{(x,x)\mid x\in \dom(\rho)\},\ R(\rho)=\{(x,x)\mid x\in \ran(\rho)\}.$$
Hence, the projections are precisely the restrictions of the identity map on $X$.  Some examples of localisable semigroups that are not Ehresmann are given in \cite{fitzkin}.

\begin{dfn}
Let $(S,\cdot,D,R)$ and $(T,\cdot,D,R)$ be Ehresmann semigroups.  We say $F:S\rightarrow T$ is an {\em Ehresmann semigroup homomorphism} if it is a semigroup homomorphism which respects $D$ and $R$: for all $s\in S$, $D(s)F=D(sF)$ and $R(s)F=R(sF)$.
\end{dfn}

This notion of Ehresmann semigroup homomorphism coincides with that of an admissible homomorphism considered in \cite{law1}, as shown there (in the discussion prior to Lemma 4.22).  It also coincides with the notion of morphism for localisable semigroups given in \cite{fitzkin}.

\subsection{ESN-style theorems for Ehresmann semigroups}  \label{ESNEhrs}

A (small) category here refers to a partial algebra $(C,\circ,D,R)$, in which $\circ$ is a partial binary operation and $D,R$ are unary domain and range operations such that the following laws are satisfied for all $x,y,z\in C$:
\bi
\item $D(x)\circ x=x=x\circ R(x)$;
\item $D(R(x))=R(x)$, $R(D(x))=D(x)$;
\item $x\circ y$ is defined if and only if $R(x)=D(y)$;
\item if $x\circ y$ is defined then $D(x\circ y)=D(x)$, $R(x\circ y)=R(y)$;
\item if $(x\circ y)\circ z$ and $x\circ (y\circ z)$ are defined then they are equal.
\ei
This is the approach taken in \cite{fitzkin} and is a common one in algebra, in which objects are not needed; one can introduce an object for each element of the set of identities $D(C)=R(C)=\{D(x)\mid x\in C\}$ of $C$, and then the above definition is easily seen to be equivalent to the usual one for small categories.

\begin{dfn}  \label{pp}
Given an Ehresmann semigroup $(S,\cdot,D,R)$, define the partial product $s\circ t$ to equal $st$ but to only be defined when $R(s)=D(t)$.
\end{dfn}

This is precisely what is done in \cite{law1} and \cite{law2}, and for the larger class of localisable semigroups in \cite{fitzkin}.  The resulting partial algebra $(S,\circ,D,R)$ is a category which is used in various ways in these works to obtain ESN-style theorems for Ehresmann and localisable semigroups.  

The following definition is from \cite{law1}, but follows earlier work due to Ehresmann (as do most of the subsequent properties for categories considered below -- see \cite{Ehres}).

\begin{dfn}\label{defn:orderedconst}
Let $C$ be a set. We say $(C,\circ,D,R, \leq)$ is an {\em $\Omega$-structured category} if it satisfies the following axioms on $C$: 
\ben[label=\textup{(OC\arabic*)}]
\item $(C,\circ,D,R)$ is a category and $(C,\leq)$ is a partially ordered set;
\item if $a\leq b$ then $D(a)\leq D(b)$ and $R(a)\leq R(b)$;
\item if $a\leq b$ and $c\leq d$ with both $a\circ c$ and $b\circ d$ existing, then $a\circ c\leq b\circ d$.
\een
\end{dfn}

The definition of ordered category given in \cite{law1} also requires the following law:
\ben[label=\textup{(OC4)}]
\item if $a\leq b$, $D(a)=D(b)$, and $R(a)=R(b)$ then $a=b$.
\een

Note that (OC4) can be expressed as saying that $\leq$ is trivial on hom-sets.  It can be usefully strengthened in two different ways:

\ben[label=\textup{(OC4A)}]
\item for all $s,t\in C$ if $s\leq t$ and $D(s)=D(t)$, then $s=t$;
\een
and 
\ben[label=\textup{(OC4B)}]
\item for all $s,t\in C$ if $s\leq t$ and $R(s)=R(t)$, then $s=t$.
\een

As in \cite{law1}, the $\Omega$-structured category $(C,\circ,D,R,\leq)$ is said to be  {\em sufficiently regular} if it satisfies the following conditions (together called (OC6) in \cite{law1}): 
\bi
\item[\textup{(OC6a)}] for all $x\in C$ and $e\in D(C)$ for which $e\leq D(x)$, there is $e|x=max\{y\leq x\mid D(y)\leq e\}$, and  $D(e|x)=e$;
\item[\textup{(OC6b)}] for all $x\in C$ and $e\in D(C)$ for which $e\leq R(x)$, there is $x|e=max\{y\leq x\mid R(y)\leq e\}$, and  $R(x|e)=e$.
\ei

In an $\Omega$-structured category $(C,\circ,D,R,\leq)$, if (OC6a) (respectively (OC6b)) is satisfied, $x\in C$ and $e\in D(C)$ with $e\leq D(x)$ (resp. $e\leq R(x)$), we call $e|x$ (resp. $x|e$) the {\em restriction} (resp. {\em corestriction}) of $x$ by $e$.

A further possible law for $\Omega$-structured categories which is considered in \cite{law1} is the following: 
\ben[label=\textup{(OC7)}]  
\item if $a\leq b\circ c$, then there are $b'\leq b,c'\leq c$ such that $a=b'\circ c'$.
\een

A {\em regular} category is a sufficiently regular one satisfying (OC7).  

A further possible law holding in an $\Omega$-structured category $C$ considered in \cite{law1} is the following, which asserts that $D(C)$ is an order-ideal: 
\ben[label=\textup{(OCI)}]  
\item if $a\in C$ and $a\leq e\in D(C)$, then $a\in D(C)$.
\een

As Lawson notes in \cite{law1}, de Barros makes the following definition in \cite{debar}.

\begin{dfn}   \label{treg}
An $\Omega$-structured category satisfying (OC4), (OC6), (OC7) and (OCI) is a {\em totally, regularly ordered category}.
\end{dfn}

A stronger property (strictly, pair of properties) than (OC6) for $\Omega$-structured categories was considered in \cite{law1} and called (OC8) there:
\ben
\item[\textup{(OC8a)}] 
for all $x\in C$ and $e\in D(C)$ such that $e\leq D(x)$, $e|x$ is the unique $y\leq x$ such that $D(y)=e$;
\item[\textup{(OC8b)}]  for all $x\in C$ and $e\in D(C)$ such that $e\leq R(x)$, $x|e$ is the unique $y\leq x$ such that $R(y)=e$.
\een

In \cite{law1}, an $\Omega$-structured category satisfying (OC8a) (resp, (OC8b)) was said to have {\em restrictions} (resp. {\em corestrictions}).  (Strictly speaking, Lawson in \cite{law1} uses terminology dual to that used here, since compositions are read right-to-left there.)  Hence, the notions of restriction and corestriction defined here based on (OC6) are more general than Lawson's.

It was noted in Proposition 2.8 of \cite{law1} that if (OC8) holds in an $\Omega$-structured category, then so do (OC4), (OC7) and (OCI), and of course (OC6) obviously holds also, so the category is totally, regularly ordered.  Examining the short proof of this result in \cite{law1} shows that (OC8a) is sufficient for not just (OC4), (OC7) and (OCI), but also the stronger condition (OC4A); likewise (OC8b) is sufficient for (OC4B).  It is also easy to see that (OC4A) and (OC6) together imply (OC8a); indeed (OC4A) and (OC8a) are equivalent in the presence of (OC6).   We have therefore shown the following.

\begin{pro}   \label{oc}
For an $\Omega$-structured category, (OC8a) is equivalent to (OC4A) and (OC6), and (OC8) is equivalent to (OC4A), (OC4B) and (OC6).
\end{pro}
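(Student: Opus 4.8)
\medskip\noindent The plan is to establish the two ``half'' equivalences: (i) (OC8a) holds if and only if (OC4A) and (OC6a) hold, and (ii) (OC8b) holds if and only if (OC4B) and (OC6b) hold. Statement (i) is the first assertion of the Proposition; (ii) follows from (i) by passing to the opposite category, which is again $\Omega$-structured because (OC2) and (OC3) are symmetric in $D$ and $R$, and under which (OC8a), (OC4A), (OC6a) become (OC8b), (OC4B), (OC6b) respectively; and the conjunction of (i) and (ii) is the second assertion, since (OC8) is (OC8a) together with (OC8b) and (OC6) is (OC6a) together with (OC6b). So it suffices to prove (i).

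For the forward implication, assume (OC8a). To get (OC4A): if $s\leq t$ and $D(s)=D(t)=:e$, then $e\leq D(t)$, and both $s$ and $t$ lie below $t$ with domain $e$, so the uniqueness clause of (OC8a) forces $s=t$. To get (OC6a): fix $x\in C$ and $e\in D(C)$ with $e\leq D(x)$; by (OC8a) the element $e|x$ exists, lies below $x$, satisfies $D(e|x)=e$, and hence belongs to $\{y\leq x\mid D(y)\leq e\}$. It remains to see that it is the largest member of this set. The one mildly non-routine point is the ``transitivity of restriction'' identity $e'|x=e'|(e|x)$ valid whenever $e'\in D(C)$ with $e'\leq e$: since $e'\leq e=D(e|x)$, applying (OC8a) to the element $e|x$ exhibits $e'|(e|x)$ as the unique element below $e|x$ with domain $e'$; as $e|x\leq x$, this element also lies below $x$ with domain $e'$, so uniqueness in (OC8a) applied to $x$ identifies it with $e'|x$, and in particular $e'|x=e'|(e|x)\leq e|x$. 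Now take any $y\leq x$ with $D(y)\leq e$; then $D(y)\leq D(x)$ by (OC2), so $y$ lies below $x$ with domain $D(y)$, forcing $y=D(y)|x$ by (OC8a), and taking $e'=D(y)$ in the identity above gives $y=D(y)|x\leq e|x$. Hence $e|x=\max\{y\leq x\mid D(y)\leq e\}$, which is (OC6a).

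For the converse, assume (OC4A) and (OC6a), and fix $x\in C$, $e\in D(C)$ with $e\leq D(x)$. By (OC6a), $e|x=\max\{y\leq x\mid D(y)\leq e\}$ exists and $D(e|x)=e$, so $e|x$ is an element below $x$ with domain $e$. If $y\leq x$ also has $D(y)=e$, then $D(y)=e\leq e$, so $y$ belongs to the set whose maximum is $e|x$ and therefore $y\leq e|x$; as $D(y)=e=D(e|x)$, (OC4A) gives $y=e|x$. Thus $e|x$ is the unique element below $x$ with domain $e$, which is (OC8a).

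The only step requiring any thought is the maximality clause in the forward implication; once the identity $e'|x=e'|(e|x)$ has been isolated (it rests purely on the uniqueness built into (OC8a)), everything else is a direct unwinding of definitions, using only the partial-order axioms and (OC2) among the $\Omega$-structured category axioms.
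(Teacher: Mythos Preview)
Your argument is correct and follows the same direct-verification route as the paper, whose ``proof'' is the short discussion immediately preceding the proposition (deferring to Lawson for (OC8a)$\Rightarrow$(OC4A), noting (OC6) ``obviously holds'' under (OC8), and calling the converse ``easy to see''); your proof is simply more self-contained, in particular spelling out the maximality clause of (OC6a) via the identity $e'|x=e'|(e|x)$. One small caveat: you identify the first assertion with your (i), which involves only (OC6a), whereas the proposition as written has (OC6); the paper's own lead-in makes clear the intended content is that (OC4A) and (OC8a) are equivalent \emph{in the presence of} (OC6), and your sharper (i) certainly delivers this.
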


The Ehresmann categories of \cite{law1}, shown there to be equivalent to (unordered) Ehresmann semigroups, can now be defined.  (Again note that Lawson used dual definitions in \cite{law1}.)

\begin{dfn}   \label{Ehrscat}
A structure $(C,\circ,D,R,\leq_l,\leq_r)$ is said to be an {\em Ehresmann category} if $(C,\circ,D,R)$ is a category, and $\leq_l$ and $\leq_r$ are  partial orders on $C$ such that:
\bi
\item $(C,\circ,D,R,\leq_l)$ is $\Omega$-structured and satisfies (OC8a);
\item $(C,\circ,D,R,\leq_r)$ is $\Omega$-structured and satisfies (OC8b);
\item $\leq_l$ and $\leq_r$ agree on $D(C)$ (call it $\leq$ there);
\item $(D(C),\leq)$ is a meet-semilattice;
\item $\leq_l\circ \leq_r=\leq_r\circ \leq_l$ (so each of these equals the join of $\leq_l,\leq_r$);
\item if $x\leq_r y$ and $e\in D(C)$ then $(D(x)\wedge e)|x\leq_r (D(y)\wedge e)|y$;
\item if $x\leq_l y$ and $e\in D(C)$ then $x|(R(x)\wedge e)\leq_l y|(R(y)\wedge e)$.
\ei
\end{dfn}

This is how the defining laws appeared in \cite{law1}, except that the first law above was stated in terms of $(C,\circ,D,R,\leq_l)$ being an ordered category satisfying (OC8a).  But we have seen that amongst $\Omega$-structured categories, (OC8a) implies (OC4A) and hence (OC4).  So it is sufficient to require that $(C,\circ,D,R,\leq_l)$ be $\Omega$-structured satisfying (OC8a); similarly for the second law.  Note that assuming (OC8a) holds, the restriction of $\leq_l$ to $D(C)$ serves to define $\leq_l$ on all of $C$, since $s\leq_l t$ if and only if $D(s)\leq D(t)$ and $s=D(s)|t$.

Lawson showed in \cite{law1} that an Ehresmann semigroup $(S,\cdot,D,R)$ gives rise to an Ehresmann category $(S,\circ,D,R,\leq_l,\leq_r)$ where $\circ$ is as in Definition \ref{pp} and $\leq_l$, $\leq_r$ are defined via $x\leq_l y$ whenever $x=D(x)y$ and $x=yR(x)$ respectively, and then $e|s=es$ (if $e\leq D(s)$) and $s|f=sf$ (if $f\leq R(s)$).  Conversely, he showed that every Ehresmann category $(C,\circ,D,R,\leq_l,\leq_r)$ gives rise to an Ehresmann semigroup $(C,\otimes,D,R)$ by setting $s\otimes t=(s|(R(s)\wedge D(t))\circ (R(s)\wedge D(t))|t)$.  He showed that these constructions are mutually inverse, and indeed that the categories are isomorphic (where one equips the category of Ehresmann semigroups with Ehresmann semigroup homomorphisms and the category of Ehresmann categories with morphisms that are so-called strongly ordered functors -- functors preserving both the given partial orders as well as meets of elements of $D(C)$).

We now turn to the rather simpler approach used by Lawson in \cite{law2}.
Thus, let $(C,\circ,D,R)$ be a (small) category with $D(C)$ its set of identities.  Lawson defined an {\em Ehresmann biaction} on $C$ as follows: 
\ben[label=\textup{(E\arabic*)}]
\item $D(C)$ is equipped with the structure of a commutative,
idempotent semigroup; denote the operation by $\wedge$ and interpret it as meet (so define $e\leq f$ whenever $e=e\wedge f$).
\item There is a left action $D(C)\times C\rightarrow C$ denoted
by $(e, a) \rightarrow e \cdot a$ (hence satisfying $(e\wedge f)\cdot a=e\cdot (f\cdot a)$) 
such that $D(a)\cdot a = a$ and $D(e \cdot a) = e\wedge D(a)$; there is a right action $C\times D(C)\rightarrow C$ denoted
by $(a, e) \rightarrow a \cdot e$ such that $a\cdot R(a) = a$ and $R(a \cdot e) = R(a)\wedge e$. 
\item The biaction property $(e \cdot  a)\cdot f = e\cdot (a\cdot f)$ holds.
\item For all $e\in D(C)$ and $a\in S$, $e\cdot a = e\wedge a$ and $a\cdot e = a\wedge e$ if $a\in D(C)$.
\item $R(e\cdot a) \leq R(a)$ and $D(a\cdot e)\leq D(a)$ for all $a\in C$ and $e\in D(C)$.
\item When $a\circ b$ exists for some $a,b\in C$, then for all $e\in D(C)$, $e\cdot (a\circ b) = (e\cdot a)\circ(R(e \cdot  a)\cdot b)$ and $(a\circ b)\cdot e = (a \cdot  D(b\cdot e))\circ (b\cdot e)$.
\een

In \cite{law2}, Lawson showed that Ehresmann semigroups give rise to categories with Ehresmann biactions and conversely.  If $(S,\cdot,D,R)$ is an Ehresmann semigroup, turn it into a category by defining the partial products $a\circ b$ as in Definition \ref{pp} above, by defining the left action via $e\cdot a=ea$ and similarly for the right action.  In the other direction, given a category $C$ with Ehresmann biaction, define the ``pseudoproduct" 
$a\bullet b=(a\cdot D(b))\circ (R(a)\cdot b)$, and then $(C,\bullet, D,R)$ is an Ehresmann semigroup; moreover these constructions are mutually inverse (and there is a suitable category isomorphism, as follows from results in \cite{fitzkin}).  We use these facts in what follows.

A more general approach in which the identities $D(C)$ have band structure rather than semilattice structure, is considered in \cite{fitzkin}, and an equivalence between such so-called transcription categories and localisable semigroups is established there.  Indeed the results in the first main section of \cite{law2} can be viewed as a special case of those obtained in \cite{fitzkin}.  For reasons that will become clear shortly, we concentrate on Lawson's approach here.

\section{Ordered Ehresmann semigroups}

\subsection{Basic properties and examples}

In the current work, we consider Ehresmann semigroups endowed with a compatible but not algebraically definable partial order.  In fact we do this in what initially seems to be greater generality. 

\begin{dfn}
Let $S$ be a set.  We say $(S,\cdot,D,R,\leq)$ is an {\em ordered localisable semigroup} if:
\ben[label=\textup{(OS\arabic*)}]
\item $(S,\cdot,D,R)$ is a localisable semigroup and $(S,\leq)$ is a partially ordered set;  \label{OS1}
\item if $a,b\in S$ and $a\leq b$ then $D(a)\leq D(b)$ and $R(a)\leq R(b)$;  \label{OS2}
\item if $a,b\in S$ and $a\leq b$ and $c\leq d$, then $ac\leq bd$ (that is, $(S,\cdot,\leq)$ is an {\em ordered semigroup});  \label{OS3}
\item[\textup{(OS6)}] if $a\in S$ and $e\in D(S)$, then $ae\leq a$ and $ea\leq a$; and \label{OS6}
\item[\textup{(OSI)}] if $a\in S$ and $e\in D(S)$, then $a\leq e$ implies $a\in D(S)$. \label{OSI}
\een
\end{dfn}

(The seemingly odd labelling of the defining laws above is to help make clear how the various properties for the semigroup case correspond to those defined in the category case as in Subsection \ref{ESNEhrs}.)

We might as well have restricted attention to Ehresmann semigroups all along because of the following.

\begin{pro}
Every ordered localisable semigroup is Ehresmann, and then the given order agrees with the semilattice order on $D(S)$: for all $e,f\in D(S)$,  $e\leq f$ if and only if $e=ef$.
\end{pro}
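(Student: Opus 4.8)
The plan is to establish two things: first, that the distinguished band $D(S)$ is actually a semilattice (which is exactly what is needed for the localisable semigroup to be Ehresmann), and second, that the given partial order restricts to the semilattice order on $D(S)$. I would start with the second part, since it is likely to feed into the first. Take $e,f\in D(S)$. By (OS6), $ef\leq e$ and $ef\leq f$. Now suppose $e\leq f$; I want to conclude $e=ef$. Using (OS3) with $e\leq f$ and $e\leq e$ (reflexivity), we get $ee\leq fe$, i.e.\ $e\leq fe$ since $e$ is idempotent. On the other hand $fe\leq e$ by (OS6). Antisymmetry gives $e=fe$. To get $e=ef$ rather than $e=fe$ I would run the symmetric computation, or else note that once commutativity of projections is in hand the two coincide; so the order of steps matters here and I should be careful. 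Conversely, if $e=ef$ then $e\leq f$ follows immediately from (OS6) ($ef\leq f$).

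For the Ehresmann property — that $D(S)$ is commutative — the key obstacle, and really the crux of the whole proposition, is showing $ef=fe$ for all $e,f\in D(S)$ using only the localisable axioms (L1)--(L4) together with the order axioms (OS2), (OS3), (OS6), (OSI). Here is the line I would try. Given $e,f\in D(S)$, the element $ef$ lies in $D(S)$ only if we already knew commutativity, so instead I would look at $D(ef)$ and $R(ef)$, which are projections by definition. By (L3), $D(ef)=D(eD(f))=D(ef)$ — unhelpfully circular — so instead use (L1): $D(ef)\cdot ef=ef$, and by (OS6) applied with the projection $D(ef)$ on the left, $D(ef)\cdot(ef)\leq ef$; combined with $D(ef)\cdot ef=ef$ this is just an identity. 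The more promising route: by (OS6), $ef\leq e$ and $ef\leq f$, so $ef$ is a lower bound of $\{e,f\}$. Apply (OS2) to $ef\leq e$: $D(ef)\leq D(e)=e$ and $R(ef)\leq R(e)=e$; likewise $D(ef)\leq f$ and $R(ef)\leq f$ from $ef\leq f$. Now $D(ef)\in D(S)$ and $D(ef)\leq e$, so by the first part of the proof (the semilattice-order identification, which I established without commutativity as long as I'm careful) $D(ef)=D(ef)\cdot e$; similarly $D(ef)=D(ef)\cdot f$. I would then push these relations, together with (L1) giving $D(ef)\cdot ef=ef$, through the localisable laws to show $D(ef)=ef=R(ef)$, which forces $ef\in D(S)$; and by symmetry $fe\in D(S)$ too. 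Once $ef,fe\in D(S)$, both are lower bounds of $\{e,f\}$, and I would aim to show each is the greatest such lower bound — e.g.\ if $g\in D(S)$ with $g\leq e$ and $g\leq f$, then by (OS3) $g=gg\leq ef$ — whence $ef=fe$ by antisymmetry.

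The delicate point throughout is the interdependence: the cleanest proof of the order-restricts-to-semilattice-order claim seems to use commutativity, while the cleanest proof of commutativity seems to use the order claim. I expect the resolution is to prove a one-sided, asymmetric version of the order statement first (namely: for $e,f\in D(S)$, $e\leq f$ implies $e=fe$ \emph{and} $e=ef$, with both obtained by running (OS3) on both sides), which needs no commutativity, and only afterwards deduce commutativity of $D(S)$, and finally read off the symmetric semilattice-order statement in its final form. The main obstacle is therefore bookkeeping — making sure each implication uses only axioms already available at that stage — rather than any single hard inequality; every individual step is a short application of (OS2), (OS3), (OS6), or antisymmetry. I would also double-check that (OSI) is genuinely needed (it is what lets us promote an element dominated by a projection back into $D(S)$, which is exactly the mechanism for showing $ef\in D(S)$); if the argument above closes without it, so much the better, but I expect it is essential in the step $D(ef)=ef=R(ef)\Rightarrow ef\in D(S)$ or in a variant thereof.
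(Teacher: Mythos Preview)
Your outline is correct in substance, but it is considerably more circuitous than the paper's argument, and it contains one genuine confusion that creates most of the apparent difficulty.

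The confusion is your claim that ``the element $ef$ lies in $D(S)$ only if we already knew commutativity''. This is false: axiom \ref{L4} says precisely that $D(D(s)D(t))=D(s)D(t)$, so for $e,f\in D(S)$ we have $D(ef)=ef$, i.e.\ $ef\in D(S)$, in \emph{any} localisable semigroup. In particular $ef$ is idempotent. This is why the paper can write $ef=D(ef)ef=efef$ without comment. Once you have this, your greatest-lower-bound argument closes immediately: $ef\leq e$ and $ef\leq f$ by (OS6), so $(ef)(ef)\leq fe$ by (OS3), giving $ef\leq fe$; symmetrically $fe\leq ef$.

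The paper's own proof of commutativity is even shorter and avoids (OS3) entirely for this step: it chains two applications of (OS6) to get $ef=efef\leq fef\leq fe$, and then argues dually. After that, the order-agreement claim is a two-liner: $e=ef$ gives $e\leq f$ by (OS6), and conversely $e\leq f$ gives $e=ee\leq ef\leq e$ by (OS3) and (OS6). So the paper does commutativity first and the order claim second, the reverse of your plan; neither ordering requires any delicate bootstrapping once you know $ef\in D(S)$.

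Finally, your suspicion about (OSI) is answered in the negative: the paper's proof does not use (OSI) at all for this proposition, and neither does your argument once the confusion above is removed.
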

\begin{proof}
Let $(S,\cdot,D,R,\leq)$ be an ordered localisable semigroup.  Then for all $e,f\in D(S)$, $ef=D(ef)ef=efef\leq fef\leq fe$ by \ref{OS6}, so $ef\leq fe$; dually, $fe\leq ef$ and so $ef=fe$.  Hence $S$ is Ehresmann.

If $e=ef$ then $e\leq f$ by (OS6).  Conversely, if $e\leq f$, then by \ref{OS3} and (OS6), $e=ee\leq ef\leq e$, so $e=ef$.
\end{proof}

Hence, from now on we refer only to ordered Ehresmann semigroups.

\begin{dfn}
Let $\leq$ be a partial order on the Ehresmann semigroup $(S,\cdot,D,R)$.  We say it is an {\em Ehresmann order} if $(S,\cdot,D,R,\leq)$ is an ordered Ehresmann semigroup.
\end{dfn}

Ordered Ehresmann semigroups are easily seen to generalise Boolean Ehresmann monoids as defined in \cite{law2}.  Boolean Ehresmann monoids include as examples the Ehresmann semigroup $\Rel(X)$ (for any non-empty set $X$) equipped with the partial order of inclusion, and hence any subsemigroup of $\Rel(X)$ closed under $D$ and $R$ is an ordered Ehresmann semigroup.  Generalising this example is the Boolean Ehresmann monoid $P(C)$ for any category $C$ (see \cite{law2} for its definition); hence any subsemigroup closed under $D$ and $R$ will be ordered Ehresmann.

A given Ehresmann semigroup can be Ehresmann ordered in more than one way.  

\begin{eg} A two-element Ehresmann semigroup with two Ehresmann orders. \label{twoord} \end{eg}

Let $S=\{0,1\}$ be the two-element monoid with zero $0$, and define $D(x)=1=R(x)$ for all $x\in S$.  Then $(S,\cdot,D,R)$ is easily seen to be an Ehresmann semigroup, and it has two non-isomorphic Ehresmann orders, $\leq_1$ and $\leq_2$.  In the first, $1\leq_1 0$.  (This example may be realised as binary relations on $X$ equipped with composition, $D,R$ and inclusion, with $1$ the diagonal relation and $0$ the full relation.)  In the second, $0,1$ are not related, so that $\leq_2$ is equality.  (Verification of the defining laws for Ehresmann orders is routine.)  
\bigskip

On the other hand, we have the following.

\begin{eg} An Ehresmann semigroup with no Ehresmann order. \label{notord} \end{eg}

Consider the subsemigroup $S=\{c,d,P_x,P_y,P_z,1\}$ of the full tranformation semigroup on $X=\{x,y,z\}$, in which $c=\{(x,x),(y,y),(z,x)\}, d=\{(x,x),(y,y),(z,y)\}$, $P_x$ is the projection onto $\{x\}$, and similarly for $P_y,P_z$, and $1$ is the identity map on $X$.  It is easily checked that $S$ is closed under composition, and has the following multiplication table.

\begin{center}
\begin{tabular}{c|cccccc}
$\circ$&$c$&$d$&$P_x$&$P_y$&$P_z$&$1$\\
\hline
$c$&$c$&$c$&$P_x$&$P_y$&$P_z$&$c$\\
$d$&$d$&$d$&$P_x$&$P_y$&$P_z$&$d$\\
$P_x$&$P_x$&$P_x$&$P_x$&$P_y$&$P_z$&$P_x$\\
$P_y$&$P_y$&$P_y$&$P_x$&$P_y$&$P_z$&$P_y$\\
$P_z$&$P_x$&$P_y$&$P_x$&$P_y$&$P_z$&$P_z$\\
$1$&$c$&$d$&$P_x$&$P_y$&$P_z$&$1$
\end{tabular}
\end{center}

Notice that $(S,\cdot)$ is a band.  Define $D(P_x)=D(P_y)=D(P_z)=P_z$ and $D(c)=D(d)=D(1)=1$, with $R(P_x)=R(P_y)=R(c)=R(d)=R(1)=1$ and $R(P_z)=P_z$.  Then $(S,\cdot,D,R)$ is an Ehresmann semigroup with $D(S)=\{P_z,1\}$.  To verify this, note that $D(S)$ is a semilattice, and that all the Ehresmann semigroup laws follow immediately except \ref{L3}.  For this, note that for $s,t\in S$, $D(st)=D(sD(t))$ and $R(ts)=R(R(t)s)$ are trivially true if $t\in D(S)$.   Otherwise, we consider cases.

So consider $D(ct)$ for $t\in \{c,d,P_x,P_y\}$.  Then $D(ct)=D(c)$ if $t=c,d$, but for such $t$, $D(t)=1$, and so $D(ct)=D(cD(t))$; if $t=P_x$ or $t=P_y$, then $D(ct)=D(t)=P_z$, whereas $D(cD(t))=D(cP_z)=D(P_z)=P_z=D(ct)$.  So in all cases, $D(ct)=D(cD(t))$.  The argument for $D(dt)=D(dD(t))$ is very similar.  From the table we see that $D(P_xt)=P_z$ for all $t$, so $D(P_xt)=D(P_xD(t))=P_z$.  Similarly for $P_y,P_z$.  And $D(1t)=D(t)=D(1D(t))$.  So $D(st)=D(sD(t))$ holds for all $s,t\in S$.

Next, consider $R(tc)$, $t\in \{c,d,P_x,P_y\}$.  If $t=c,d$ then $R(tc)=1=R(c)=R(1c)=R(R(t)c)$; if $t=P_x$ or $t=P_y$ then $R(tc)=R(t)=P_z=R(P_zc)=R(R(t)c)$, so in all cases $R(tc)=R(R(t)c)$.  Again, the argument that $R(td)=R(R(t)d)$ is very similar.  Finally, $R(tP_x)=R(P_x)$ for all $t$, so in particular, $R(tP_x)=R(R(t)P_x)$ for all $t$, and similarly for $R(tP_y)$ and $R(tP_z)$.  Finally, $R(t1)=R(R(t)1)$.  So in all cases, $R(ts)=R(R(t)s)$ for all $t,s\in S$.

This completes the proof that $(S,\cdot,D,R)$ is an Ehresmann semigroup.  Suppose the partial order $\leq$ on $S$ is an Ehresmann order.  Now $P_zc\leq c$ and so $P_y(P_zc)\leq P_yc$, and similarly $P_xP_zd\leq P_xd$.  Direct computation then gives that $P_x=P_yP_zc\leq P_yc=P_y$, but also $P_y=P_xP_zd\leq P_xd=P_x$, and so $P_x=P_y$, a contradiction.

\begin{dfn}
Let $(S,\cdot,D,R,\leq)$ and $(T,\cdot,D,R,\leq)$ be ordered Ehresmann semigroups.  Then $F:S\rightarrow T$ is an {\em ordered Ehresmann semigroup homomorphism} if it is an Ehresmann semigroup homomorphism that respects the Ehresmann order: for all $s,t\in S$, $s\leq t$ implies $sF\leq tF$. 
\end{dfn}

It is easily seen that the class of ordered Ehresmann semigroups forms a category in which the morphisms are ordered Ehresmann semigroup homomorphisms.   (One need only verify closure under composition).  Moreover, using the terminology of \cite{fitzkin}, these are nothing but order-preserving localisable semigroup morphisms.

An ordered Ehresmann semigroup $(S,\cdot,D,R,\leq)$ can satisfy semigroup versions of the other properties previously considered for categories: 
\ben
\item[\textup{(OS4)}] for all $s,t\in S$, if $s\leq t, D(s)=D(t)$ and $R(s)=R(t)$, then $s=t$;
\item[\textup{(OS4A)}] for all $s,t\in S$ if $s\leq t$ and $D(s)=D(t)$, then $s=t$;
\item[\textup{(OS4B)}] for all $s,t\in S$ if $s\leq t$ and $R(s)=R(t)$, then $s=t$;
\item[\textup{(OS7)}] for all $s,t\in S$, if $u\leq st$ then there are $s'\leq s, t'\leq t$ for which $u=s't'$. 
\een

\subsection{When the order is algebraically definable}

Let $(S,\cdot,D,R)$ be an Ehresmann semigroup.  Following \cite{law1}, denote by $\leq_l$ the partial order given by $s\leq_l t$ if and only if $s=D(s)t$, and define $\leq_r$ dually, via $s\leq_r t$ if and only if $s=tR(s)$.  It is noted in \cite{law1} and easy to see that these two partial orders permute, and hence their join is simply their composite as binary relations; define $\leq_e$ to be this join.  As noted in \cite{law1}, $\leq_e$ is a partial order, agreeing with the natural partial order on the (meet-)semilattice $D(S)$  (Proposition 3.11 applied to an Ehresmann semigroup), and $s\leq_e t$ if and only if $s=gth$ for some $g,h\in D(S)$ (Proposition 3.13 in \cite{law1}).  
 
\begin{lem}   \label{leqe0}
Suppose $(S,\cdot,D,R)$ is an Ehresmann semigroup.  Then for all $s,t\in S$, $s\leq_e t$ if and only if $s=D(s)tR(s)$.  Hence any Ehresmann order $\leq$ on $S$ contains $\leq_e$ (in the sense that any pair related by $\leq_e$ is also related by $\leq$).
\end{lem}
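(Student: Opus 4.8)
The plan is to prove the biconditional $s \leq_e t \iff s = D(s)tR(s)$ directly, then derive the containment statement using the axioms of an Ehresmann order. For the forward direction, I would start from the characterisation already recalled from \cite{law1}: $s \leq_e t$ means $s = gth$ for some $g,h \in D(S)$. Given such $g,h$, I would apply $D$ and $R$ to the equation $s = gth$ and use the Ehresmann semigroup laws (particularly \ref{L2}, \ref{L3}, \ref{L4} and the commutativity of projections) to show $D(s) \leq g$ and $R(s) \leq h$ in the semilattice $D(S)$, i.e. $D(s)g = D(s)$ and $R(s)h = R(s)$. Then $D(s)tR(s) = D(s)(gth)R(s) = (D(s)g)t(hR(s)) = D(s) \cdot s \cdot R(s)$... wait, more carefully: $D(s)tR(s)$ — I want to insert $g,h$. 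Since $s = gth$, I have $D(s) s R(s) = D(s) g t h R(s) = (D(s)g) t (h R(s))$; using $D(s)g = D(s)$ and $hR(s) = R(s)$ (projections commute so order of multiplication is flexible), this equals $D(s) t R(s)$. But also $D(s)sR(s) = s$ by \ref{L1}. Hence $s = D(s)tR(s)$. The reverse direction is immediate: if $s = D(s)tR(s)$ then taking $g = D(s)$, $h = R(s)$ exhibits $s = gth$ with $g,h \in D(S)$, so $s \leq_e t$.

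For the second sentence, suppose $\leq$ is an Ehresmann order on $S$ and $s \leq_e t$; I must show $s \leq t$. By the first part, $s = D(s)tR(s)$. Now apply axiom (OS6): for any $a \in S$ and $e \in D(S)$ we have $ea \leq a$ and $ae \leq a$. Thus $tR(s) \leq t$, and then $D(s)(tR(s)) \leq tR(s)$, so by transitivity of $\leq$ we get $s = D(s)tR(s) \leq t$. This completes the argument.

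The only mild subtlety — and the step I would be most careful about — is establishing $D(s) \leq g$ and $R(s) \leq h$ from $s = gth$ in the forward direction. One computes $D(s) = D(gth)$; by \ref{L3}, $D(gth) = D(g \cdot D(th)) = D(g D(t D(h)))$, and since $D(h) = h$ (as $h \in D(S)$, using \ref{L2} applied appropriately, or simply that projections are fixed by $D$), this is $D(gD(th))$, a product of two projections when we also note $D(g) = g$; so $D(s) = g \cdot D(th)$ lies in $D(S)$ and is $\leq g$ because projections commute and $g \cdot (g D(th)) = g^2 D(th) = g D(th) = D(s)$. The dual computation with $R$ and \ref{L3} gives $R(s) \leq h$. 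This is routine but worth spelling out since it is the one place the Ehresmann laws (as opposed to bare order axioms) are genuinely used; everything else is a two-line application of (OS6) and transitivity.
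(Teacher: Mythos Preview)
Your proposal is correct and follows essentially the same route as the paper: from $s=gth$ you deduce $D(s)=D(gD(th))=gD(th)\leq g$ (and dually $R(s)\leq h$), then rewrite $s=D(s)sR(s)=D(s)gthR(s)=D(s)tR(s)$, with the converse immediate; for the containment you apply (OS6) twice to get $s=D(s)tR(s)\leq tR(s)\leq t$. The only cosmetic difference is that the paper is terser and skips the intermediate remarks about $D(h)=h$ and commutativity of projections.
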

\begin{proof}
Suppose $s,t\in S$ satisfy $s\leq_e t$, so that $s=gth$ for some $g,h\in D(S)$.  Then $D(s)=D(gth)=D(gD(th))=gD(th)$, so $D(s)\leq g$, and similarly, $R(s)\leq h$.  Hence 
$s=D(s)sR(s)=D(s)gthR(s)=D(s)tR(s)$.  The converse direction is obvious.

Hence, if $\leq$ is an Ehresmann order on $S$, and $s\leq_e t$ for some $s,t\in S$, then $s=D(s)tR(s)\leq tR(s)\leq t$, so $s\leq t$.
\end{proof}

The partial order $\leq_e$ is important partly because in many important types of Ehresmann semigroup, it is an Ehresmann order.  In fact it is always close to being an Ehresmann order, as the following shows.

\begin{pro}  \label{leqe}
Suppose $(S,\cdot,D,R)$ is an Ehresmann semigroup.  Then $\leq_e$ satisfies the laws \ref{OS1}, \ref{OS2}, (OS6) and (OSI) of Ehresmann orders, but may not satisfy \ref{OS3}.
\end{pro}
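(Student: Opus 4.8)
The plan is to check \ref{OS1}, \ref{OS2}, (OS6) and (OSI) in turn --- each is a short consequence of Lemma \ref{leqe0} and the localisable laws --- and then to dispose of the claim about \ref{OS3} by pointing to Example \ref{notord}.

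For \ref{OS1} there is nothing new: $S$ is Ehresmann, hence localisable, and $\leq_e$ is already known to be a partial order (as recorded just before Lemma \ref{leqe0}, following \cite{law1}). For \ref{OS2} I would use the decomposition $\leq_e=\leq_l\circ\leq_r$: given $s\leq_e t$, pick $y$ with $s\leq_l y\leq_r t$. Since $s\leq_l y$ means $s=D(s)y$, applying $D$ and the derived law $D(D(s)y)=D(s)D(y)$ gives $D(s)\leq D(y)$, and $R(s)=R(D(s)y)\leq R(y)$ by the routine fact $R(uv)\leq R(v)$; dually, from $y\leq_r t$ we get $R(y)\leq R(t)$ and $D(y)=D(tR(y))\leq D(t)$ using $D(uv)\leq D(u)$. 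Chaining these yields $D(s)\leq D(t)$ and $R(s)\leq R(t)$, as required.

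For (OS6), note first that $\leq_l$ and $\leq_r$ are both contained in $\leq_e$ (as $\leq_e=\leq_l\circ\leq_r$ and equality lies below each). For $a\in S$ and $e\in D(S)$ we have $D(ea)=D(eD(a))=eD(a)$ since both factors are projections, so $D(ea)\cdot a=eD(a)\cdot a=ea$, giving $ea\leq_l a$ and hence $ea\leq_e a$; dually $ae\leq_r a\leq_e a$. For (OSI), if $a\leq_e e\in D(S)$ then Lemma \ref{leqe0} gives $a=D(a)eR(a)$, and since $D(S)$ is closed under multiplication this exhibits $a$ as an element of $D(S)$.

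Finally, for the failure of \ref{OS3}: having shown that $\leq_e$ satisfies \ref{OS1}, \ref{OS2}, (OS6) and (OSI) in every Ehresmann semigroup, if it also satisfied \ref{OS3} always then $\leq_e$ would always be an Ehresmann order, contradicting Example \ref{notord}, which exhibits an Ehresmann semigroup admitting no Ehresmann order at all. (Concretely there, $P_zc\leq_e c$ by (OS6) and $P_y\leq_e P_y$, while $P_y\cdot(P_zc)=P_x$, $P_y\cdot c=P_y$ and $P_x\neq P_zP_y=P_y$, so $P_x\not\leq_e P_y$ and \ref{OS3} fails.) I expect the only mildly fiddly point to be \ref{OS2}: the naive computation from $s=gth$ only produces $D(s)\leq g$ and $R(s)\leq h$ rather than $D(s)\leq D(t)$ and $R(s)\leq R(t)$, so one must route through the $\leq_l\circ\leq_r$ factorisation as above; and, pleasantly, that same factorisation together with the reference to Example \ref{notord} makes the \ref{OS3} part essentially free.
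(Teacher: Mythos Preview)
Your proof is correct and follows broadly the same outline as the paper's. The tactical differences are minor: for \ref{OS2} you route through the factorisation $\leq_e=\leq_l\circ\leq_r$ and the standard inequalities $D(uv)\leq D(u)$, $R(uv)\leq R(v)$, whereas the paper works directly from the Lemma~\ref{leqe0} form $a=D(a)bR(a)$ and computes $D(a)=D(a)D(bR(a))\leq D(bR(a))\leq D(b)$; for (OS6) you establish $ea\leq_l a$ and $ae\leq_r a$ (which is arguably cleaner), while the paper verifies the $\leq_e$ characterisation directly. Your closing remark that the ``naive computation from $s=gth$'' fails for \ref{OS2} is slightly off the mark: it is true that $s=gth$ gives only $D(s)\leq g$, but Lemma~\ref{leqe0} upgrades this to $s=D(s)tR(s)$, from which $D(s)\leq D(t)$ follows by the same one-line computation, so the factorisation through $\leq_l\circ\leq_r$ is not actually needed.
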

\begin{proof}
That $\leq_e$ is a partial order has already been noted, so (OS1) is satisfied.  Suppose $a,b\in S$ and $e\in D(S)$.  If $a\leq_e b$ then $a=D(a)bR(a)$, so $D(a)=D(D(a)bR(a))=D(a)D(bR(a))\leq D(bR(a))\leq D(b)$ since $D(b)bR(a)=bR(a)$, and similarly $R(a)\leq R(b)$; hence \ref{OS2} holds.  Now $ae=D(ae)aeR(ae)=D(ae)aeR(R(a)e)=D(ae)aeR(a)$, so $ae\leq_e a$; similarly, $ea\leq_e a$; hence (OS6) holds.  Finally, if $a\leq e$ then $a=D(a)eR(a)\in D(S)$; hence (OSI) holds.  

It follows that if $\leq_e$ satisfies \ref{OS3}, then $(S,\cdot,D,R,\leq_e)$ is an ordered Ehresmann semigroup.  But Example \ref{notord} shows that not every Ehresmann semigroup can be given an Ehresmann order.  Hence \ref{OS3} need not be satisfied by $\leq_e$ in general.
\end{proof}

In \cite{law1}, an Ehresmann semigroup $(S,\cdot,D,R)$ is said to be {\em de Barros} if it is such that $\leq_e$ satisfies \ref{OS3}; from Proposition \ref{leqe}, this is equivalent to requiring that $\leq_e$ be an Ehresmann order on $S$.  

We have the following easy consequence of Lemma \ref{leqe0}.

\begin{cor}
In a de Barros Ehresmann semigroup, $\leq_e$ is the smallest Ehresmann order on $S$.
\end{cor}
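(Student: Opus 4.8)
The plan is to combine the two facts already established: that in a de Barros semigroup $\leq_e$ is itself an Ehresmann order, and that by Lemma~\ref{leqe0} every Ehresmann order contains $\leq_e$. First I would recall that, by definition, $(S,\cdot,D,R)$ being de Barros means precisely that $\leq_e$ satisfies \ref{OS3}; Proposition~\ref{leqe} already gives that $\leq_e$ satisfies \ref{OS1}, \ref{OS2}, (OS6) and (OSI), so together with \ref{OS3} we conclude that $(S,\cdot,D,R,\leq_e)$ is an ordered Ehresmann semigroup, i.e.\ $\leq_e$ is an Ehresmann order on $S$. This establishes that there is at least one Ehresmann order to compare against, namely $\leq_e$ itself.

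Next I would invoke the second assertion of Lemma~\ref{leqe0}: any Ehresmann order $\leq$ on $S$ contains $\leq_e$, in the sense that $s\leq_e t$ implies $s\leq t$ for all $s,t\in S$. Since this holds for an arbitrary Ehresmann order $\leq$, and since $\leq_e$ is one such order, $\leq_e$ is contained in every Ehresmann order on $S$ and is itself an Ehresmann order; hence $\leq_e$ is the smallest element of the (nonempty) set of Ehresmann orders on $S$ ordered by containment. That completes the argument.

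I do not anticipate any real obstacle here: the content is entirely carried by Proposition~\ref{leqe} (which supplies that $\leq_e$ is an Ehresmann order once \ref{OS3} is assumed) and by Lemma~\ref{leqe0} (which supplies the minimality). The only point worth flagging explicitly is that the de Barros hypothesis is exactly what is needed to guarantee $\leq_e$ is a legitimate Ehresmann order in the first place — without it, $\leq_e$ might fail \ref{OS3} and the statement would be vacuous or false — so the proof should make clear where that hypothesis enters.
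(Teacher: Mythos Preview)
Your proposal is correct and follows exactly the paper's approach: the paper states this corollary as an immediate consequence of Lemma~\ref{leqe0}, having just noted (via Proposition~\ref{leqe}) that the de Barros hypothesis is equivalent to $\leq_e$ being an Ehresmann order. Your write-up simply spells out that two-line argument in full.
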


A {\em left restriction semigroup with range} $(S,\cdot,D,R)$ will here mean an Ehresmann semigroup additionally satisfying the law $sD(t)=D(st)s$; this is nothing but a left restriction semigroup equipped with a range operation satisfying the Ehresmann semigroup laws for range.  In \cite{law1}, these are referred to as Ehresmann semigroups satisfying the condition $IC_l$.  The main example is perhaps $\PT(X)$, the subsemigroup of $\Rel(X)$ consisting of partial transformations.  This example also satisfies the law $st=su\Rightarrow R(s)t=R(s)u$, and conversely, it was shown in \cite{scheinDR} that any left restriction semigroup with range satisfying this further law admits a functional representation; accordingly, any left restriction semigroup with range satisfying this further law will be called {\em functional}.  

There is an obvious dual definition of {\em right restriction semigroup with domain}; in \cite{law1}, these are the Ehresmann semigroups satisfying $IC_r$.  A {\em restriction semigroup} is an Ehresmann semigroup $(S,\cdot,D,R)$ such that $(S,\cdot,D)$ is a left restriction semigroup and $(S,\cdot,R)$ is a right restriction semigroup; equivalently, $(S,\cdot,D,R)$ is both a left restriction semigroup with range and a right restriction semigroup with domain.  As is well-known, every inverse semigroup is a restriction semigroup if one defines $D(s)=ss'$ and $R(s)=s's$ for all $s$ (where $s'$ is the inverse of $s$).  

It is well-known that in a left restriction semigroup $(S,\cdot,D)$ (possibly without range), $\leq_l$ satisfies \ref{OS3} and moreover that $\leq_e$ (which still makes sense in this one-sided case) equals $\leq_l$.  Hence a left restriction semigroup with range is de Barros; hence so is a right restriction semigroup with domain.  In particular, every restriction semigroup $(S,\cdot,D,R)$ is de Barros, a fact noted in \cite{law1}. 

It follows easily from  Proposition \ref{leqe0} that the class of de Barros semigroups may be viewed as a quasivariety of Ehresmann semigroups, but in fact it is a variety.

\begin{pro}  \label{deBeq}
An Ehresmann semigroup $(S,\cdot,D,R)$ is a de Barros semigroup if and only if for all $s,t\in S$ and $e\in D(S)$, $set=D(set)stR(set)$.
\end{pro}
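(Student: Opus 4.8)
The plan is to observe first that, by Lemma~\ref{leqe0}, the displayed identity $set=D(set)\,st\,R(set)$ says precisely that $set\leq_e st$. Since by definition $(S,\cdot,D,R)$ is de Barros exactly when $\leq_e$ satisfies \ref{OS3}, and since Proposition~\ref{leqe} already guarantees that $\leq_e$ satisfies \ref{OS1}, \ref{OS2}, (OS6) and (OSI), the proposition reduces to the single claim that $\leq_e$ satisfies \ref{OS3} if and only if $set\leq_e st$ for all $s,t\in S$ and $e\in D(S)$.

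For the forward implication I would argue directly: Proposition~\ref{leqe} gives $se\leq_e s$ (this is (OS6) for $\leq_e$), and trivially $t\leq_e t$, so if \ref{OS3} holds then $set=(se)\,t\leq_e st$, as required.

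For the converse, suppose $set\leq_e st$ for all $s,t,e$ and take $a\leq_e b$, $c\leq_e d$; the goal is $ac\leq_e bd$. Using the characterisation that $x\leq_e y$ iff $x=gyh$ for some $g,h\in D(S)$ (Proposition 3.13 in \cite{law1}, recalled just before Lemma~\ref{leqe0}), write $a=gbh$ and $c=kdl$ with $g,h,k,l\in D(S)$. Then $ac=gb(hk)dl$, and since $hk\in D(S)$ the hypothesis applied to $b,d$ and $hk$ gives $b(hk)d=D(b(hk)d)\,bd\,R(b(hk)d)$; substituting back yields $ac=\bigl(g\,D(b(hk)d)\bigr)\,bd\,\bigl(R(b(hk)d)\,l\bigr)$, which exhibits $ac$ as $bd$ pre- and post-multiplied by elements of $D(S)$ (the two bracketed products lying in $D(S)$ because it is a semilattice), so $ac\leq_e bd$ by the same characterisation. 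Hence \ref{OS3} holds for $\leq_e$, and $S$ is de Barros.

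The whole argument is essentially bookkeeping once the translation through Lemma~\ref{leqe0} is made; the only place needing a little care is the last step of the converse, where one must conclude $ac\leq_e bd$ via the ``$g\cdot(-)\cdot h$'' form of $\leq_e$ (invoking closure of $D(S)$ under products) rather than trying to manipulate the ``$D(-)\,(-)\,R(-)$'' form directly. I do not expect any genuine obstacle.
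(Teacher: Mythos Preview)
Your proposal is correct and follows essentially the same route as the paper's own proof: the forward direction uses (OS6) for $\leq_e$ to get $se\leq_e s$ and then (OS3) to conclude $set\leq_e st$, while the converse writes the two $\leq_e$-relations in the $g(\cdot)h$ form, applies the hypothesis to the middle projection $hk$, and re-factors to exhibit $ac$ as a $D(S)$-sandwich of $bd$. The only cosmetic difference is that the paper names the intermediate projections $h_1,h_2$ rather than writing them explicitly as $D(b(hk)d)$ and $R(b(hk)d)$.
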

\begin{proof}
If $(S,\cdot,D,R)$ is de Barros, then $(S,\cdot,D,R,\leq_e)$ is an ordered Ehresmann semigroup, so for all $s,t\in S$ and $e\in D(S)$, $se\leq_e s$ and so $(se)t\leq_e st$, so by Proposition \ref{leqe0}, $set=D(set)stR(set)$.  Conversely, if $(S,\cdot,D,R)$ satisfies $set=D(set)stR(set)$ for all $s,t\in S$ and $e\in D(S)$, then $set\leq_e st$ for all such $s,t,e$.  Suppose $x_1\leq_e y_1, x_2\leq_e y_2$ for some $x_1,x_2,y_1,y_2\in S$.  Then $x_1=f_1y_1g_1$ and $x_2=f_2y_2g_2$ for some $f_1,f_2,g_1,g_2\in D(S)$.  But $g_1f_2\in D(S)$, so $y_1g_1f_2y_2\leq_e y_1y_2$ and hence $y_1g_1f_2y_2=h_1y_1y_2h_2$ for some $h_1h_2\in D(S)$.  Hence $x_1x_2=f_1y_1g_1f_2y_2g_2=f_1h_1y_1y_2h_2g_2$ so because $f_1h_1,h_2g_2\in D(S)$, $x_1x_2\leq_e y_1y_2$.  So $\leq_e$ satisfies \ref{OS3} and so $(S,\cdot,D,R)$ is de Barros.
\end{proof}

The de Barros property can be related to property (OS4) given earlier.

\begin{pro}  \label{deBarros}
Let $(S,\cdot,D,R)$ be an Ehresmann semigroup.   If it is de Barros then the ordered Ehresmann semigroup $(S,\cdot,D,R,\leq_e)$ satisfies (OS4).  Conversely, if $(S,\cdot,D,R,\leq)$ is an ordered Ehresmann semigroup satisfying (OS4), then $(S,\cdot,D,R)$ is de Barros and $\leq$ is $\leq_e$.
\end{pro}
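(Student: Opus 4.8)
The plan is to treat the two implications separately. The forward direction is essentially a one-line consequence of Lemma~\ref{leqe0}: assuming $(S,\cdot,D,R)$ is de Barros, Proposition~\ref{leqe} together with the definition of de Barros already tells us that $(S,\cdot,D,R,\leq_e)$ is an ordered Ehresmann semigroup, and if $s\leq_e t$ with $D(s)=D(t)$ and $R(s)=R(t)$, then Lemma~\ref{leqe0} gives $s=D(s)tR(s)=D(t)tR(t)=t$, the last equality by (L1). Hence (OS4) holds.

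For the converse, suppose $(S,\cdot,D,R,\leq)$ is an ordered Ehresmann semigroup satisfying (OS4). Since $\leq$ is an Ehresmann order, Lemma~\ref{leqe0} already yields $\leq_e \subseteq \leq$, so the real content is the reverse inclusion; once $\leq = \leq_e$ is proved, $\leq_e$ is an Ehresmann order and so $S$ is de Barros by Proposition~\ref{leqe}. So let $s\leq t$ and put $u=D(s)tR(s)$. I would first sandwich $s$ as $s\leq u\leq t$: the inequality $s\leq u$ follows from (OS3) applied to $D(s)\leq D(s)$, $s\leq t$, $R(s)\leq R(s)$ once $s=D(s)sR(s)$ is rewritten using (L1), while $u\leq t$ follows from two applications of (OS6), first left-multiplying $t$ by the projection $D(s)$ and then right-multiplying by $R(s)$. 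Next I would compute the domain and range of $u$: from $s\leq u$ and (OS2) we get $D(s)\leq D(u)$ and $R(s)\leq R(u)$, whereas the derived law $D(D(s)x)=D(s)D(x)$ noted in the preliminaries gives $D(u)=D(s)D(tR(s))\leq D(s)$, and dually $R(u)=R(D(s)t)R(s)\leq R(s)$. Hence $D(s)=D(u)$ and $R(s)=R(u)$, so (OS4) forces $s=u=D(s)tR(s)$, which by Lemma~\ref{leqe0} is exactly $s\leq_e t$.

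I do not expect a genuine obstacle: the proof is short, and the only thing to watch is the bookkeeping of inequality directions when combining (OS2), (OS3) and (OS6), and invoking the correct ``half'' of the projection identities $D(D(s)x)=D(s)D(x)$ and $R(xR(s))=R(x)R(s)$ so that $D(u)$ and $R(u)$ are bounded \emph{above} by $D(s)$ and $R(s)$ respectively.
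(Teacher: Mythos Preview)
Your proposal is correct and follows essentially the same route as the paper. Both directions match: the forward direction is the same one-line substitution $s=D(s)tR(s)=D(t)tR(t)=t$, and for the converse the paper likewise sandwiches $a\leq D(a)bR(a)\leq b$, computes $D(D(a)bR(a))=D(a)D(bR(a))\leq D(a)$ (and dually for $R$) to pin down the domain and range, applies (OS4) to get $a=D(a)bR(a)$, and then concludes that $\leq=\leq_e$ forces the de Barros property via (OS3).
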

\begin{proof}
Suppose $(S,\cdot,D,R)$ is de Barros.  Suppose $a,b\in S$ are such that $a\leq_e b$, $D(a)=D(b)$ and $R(a)=R(b)$.  Then $a=D(a)bR(a)=D(b)bR(b)=b$.  So the ordered Ehresmann semigroup $(S,\cdot,D,R,\leq_e)$ satisfies (OS4).

Conversely, suppose $(S,\cdot,D,R,\leq)$ is an ordered Ehresmann semigroup satisfying (OS4).  Suppose $a,b\in S$.  If $a\leq b$ then $a=D(a)aR(a)\leq D(a)bR(a)\leq b$, so $$D(D(a)bR(a))=D(D(a)D(bR(a)))=D(a)D(bR(a))\leq D(a)\leq D(D(a)bR(a)),$$ whence $D(a)=D(D(a)bR(a))$; similarly $R(a)=R(D(a)bR(a))$, so by (OS4), $a=D(a)bR(a)$, and so $a\leq_e b$.  If $a\leq_e b$ then $a=D(a)bR(a)\leq b$.  So $\leq$ and $\leq_e$ agree.  Since $\leq$ satisfies \ref{OS3}, so must $\leq_e$, so $(S,\cdot,D,R)$ is de Barros.
\end{proof}

\begin{pro}  \label{arrows}
Let $(S,\cdot,D,R)$ and $(T,\cdot,D,R)$ be de Barros semigroups, with $F:S\rightarrow T$ an Ehresmann semigroup homomorphism.  Then $F$ is an ordered Ehresmann semigroup homomorphism from $(S,\cdot,D,R,\leq_e)$ to $(T,\cdot,D,R,\leq_e)$.
\end{pro}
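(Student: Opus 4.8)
The plan is to reduce everything to the characterisation of $\leq_e$ given in Lemma \ref{leqe0}.  Since $S$ and $T$ are de Barros, Proposition \ref{leqe} tells us that $\leq_e$ is an Ehresmann order on each, so $(S,\cdot,D,R,\leq_e)$ and $(T,\cdot,D,R,\leq_e)$ are genuine ordered Ehresmann semigroups and the notion of ordered Ehresmann semigroup homomorphism makes sense.  As $F$ is already assumed to be an Ehresmann semigroup homomorphism, the only thing left to check is that it is order-preserving: for all $s,t\in S$, $s\leq_e t$ implies $sF\leq_e tF$.

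First I would invoke Lemma \ref{leqe0} in $S$: if $s\leq_e t$ then $s=D(s)\,t\,R(s)$.  Next, apply $F$ and use that it is a semigroup homomorphism respecting $D$ and $R$, so that $(D(s))F=D(sF)$ and $(R(s))F=R(sF)$; this yields
\[
sF=(D(s)\,t\,R(s))F=(D(s))F\cdot(tF)\cdot(R(s))F=D(sF)\cdot(tF)\cdot R(sF).
\]
Finally I would apply Lemma \ref{leqe0} again, this time in $T$: the identity $sF=D(sF)\,(tF)\,R(sF)$ is exactly the condition for $sF\leq_e tF$.  Hence $F$ preserves $\leq_e$, and is therefore an ordered Ehresmann semigroup homomorphism.

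There is essentially no obstacle here: the argument is a two-line computation once Lemma \ref{leqe0} is in hand, the whole force of the proposition residing in the fact that in a de Barros semigroup $\leq_e$ is given by the purely equational condition $s=D(s)tR(s)$, which is manifestly preserved by any map respecting the three operations.  The only point worth a sentence of care is noting at the outset that the de Barros hypothesis on both $S$ and $T$ is what guarantees $\leq_e$ is an Ehresmann order on each, so that the statement is not vacuous.
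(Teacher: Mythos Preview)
Your proof is correct and is essentially identical to the paper's own argument: both invoke the characterisation $s\leq_e t\iff s=D(s)tR(s)$ from Lemma~\ref{leqe0}, apply $F$ using that it respects multiplication, $D$ and $R$, and read off $sF\leq_e tF$.  Your additional remark that the de Barros hypothesis is what makes $(S,\cdot,D,R,\leq_e)$ and $(T,\cdot,D,R,\leq_e)$ ordered Ehresmann semigroups is a helpful clarification but not part of the paper's proof proper.
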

\begin{proof}
If $s\leq_e t$ then $s=D(s)tR(s)$, so $$sF=(D(s)tR(s))F=(D(s)F)(tF)(R(s)F)=D(sF)(tF)R(sF),$$ 
and so $sF\leq_e tF$.
\end{proof}

By the above result and Proposition \ref{deBarros}, the categories of de Barros semigroups and ordered Ehresmann semigroups satisfying (OS4) are isomorphic.

\begin{pro}  \label{OS47}
For ordered Ehresmann semigroups, (OS4) implies (OS7), but the converse is false.
\end{pro}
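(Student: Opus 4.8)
The plan is to settle the positive implication quickly using results already in hand, and then to exhibit a two-element counterexample for the converse.

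For ``(OS4) implies (OS7)'', suppose $(S,\cdot,D,R,\leq)$ is an ordered Ehresmann semigroup satisfying (OS4). By Proposition \ref{deBarros}, $(S,\cdot,D,R)$ is then de Barros and $\leq$ coincides with $\leq_e$. Now let $s,t,u\in S$ with $u\leq st$, that is, $u\leq_e st$. By Lemma \ref{leqe0}, $u=D(u)(st)R(u)$, and associativity rewrites this as $u=(D(u)s)(tR(u))$. Since $D(u),R(u)\in D(S)$, law (OS6) gives $D(u)s\leq s$ and $tR(u)\leq t$, so $s':=D(u)s$ and $t':=tR(u)$ witness (OS7). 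I expect no real obstacle here: one simply applies Proposition \ref{deBarros}, Lemma \ref{leqe0} and (OS6) in turn. (Alternatively one can bypass Proposition \ref{deBarros} and argue directly that $u\leq st$ forces $u=D(u)(st)R(u)$, using (OS3), (OS6), the $D,R$-computation from the proof of Proposition \ref{deBarros}, and then (OS4).)

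For the converse, I would reuse the two-element Ehresmann semigroup $S=\{0,1\}$ of Example \ref{twoord}, equipped with the order $\leq_1$ in which $1\leq_1 0$; here $1$ is the identity and the unique projection, $1$ is the least element of $(S,\leq_1)$ and $0$ is the greatest. This fails (OS4), since $1\leq_1 0$ with $D(1)=D(0)$ and $R(1)=R(0)$, yet $1\neq 0$. It does satisfy (OS7): let $u\leq_1 st$. If $u=1$, take $s'=t'=1$; since $1$ lies below every element of $S$ and is the identity, $s'\leq_1 s$, $t'\leq_1 t$ and $s't'=1=u$. If $u=0$, then $0\leq_1 st$ forces $st=0$ because $0$ is the greatest element, so $s=0$ or $t=0$, and then $s'=s$, $t'=t$ give $s't'=0=u$.

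The one point requiring care — the ``hard part'', such as it is — is the case analysis for (OS7) under $\leq_1$, in particular the case $u=1$, which relies on $1$ being the $\leq_1$-minimum of $S$ (so that $1\leq_1 s$ and $1\leq_1 t$ hold automatically). One should also confirm, as asserted in Example \ref{twoord}, that $(S,\cdot,D,R,\leq_1)$ genuinely is an ordered Ehresmann semigroup; with $D(S)=\{1\}$ this reduces to a short check of (OS2), (OS3), (OS6) and (OSI).
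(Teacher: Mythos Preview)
Your proof is correct and follows essentially the same route as the paper: Proposition~\ref{deBarros} plus Lemma~\ref{leqe0} for the forward direction, and the two-element example from Example~\ref{twoord} with $\leq_1$ for the counterexample. The only minor difference is that you show (OS4) fails directly (from $1\leq_1 0$, $D(1)=D(0)$, $R(1)=R(0)$, $1\neq 0$), whereas the paper argues indirectly that $\leq_1\neq\leq_e$ and then invokes Proposition~\ref{deBarros}; your direct argument is if anything cleaner.
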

\begin{proof}
Suppose $(S,\cdot,D,R,\leq)$ is an ordered Ehresmann semigroup satisfying (OS4); then $\leq$ is $\leq_e$ by Proposition \ref{deBarros}.  Suppose $u\leq st$ for some $u,s,t\in S$.  Then $u\leq_e st$, so $u=D(u)stR(u)$ by Proposition \ref{leqe0}, so letting $s'=D(u)s\leq s$ and $t'=tR(u)\leq t$, clearly $u=s't'$.  Hence (OS7) holds.

Let $S=\{0,1\}$ be the two-element monoid as in Example \ref{twoord}, equipped with the Ehresmann order $\leq_1$ as in that example. Then $(S,\cdot,D,R,\leq)$ satisfies (OS7), as an easy case analysis shows.  However,  $D(1)0R(1)=0\neq 1$, so $1\not\leq_e 0$ by Proposition \ref{leqe0}, and so $\leq$ and $\leq_e$ are distinct, so $(S,\cdot,D,R,\leq)$ does not satisfy (OS4) by Proposition \ref{deBarros}.
\end{proof}
  
Recall that one and two-sided restriction semigroups are de Barros semigroups. There are specialisations of Proposition \ref{deBarros} applying to them. 

\begin{pro}  \label{rest}
Let $(S,\cdot,D,R)$ be an Ehresmann semigroup.  

If $(S,\cdot,D,R)$ is a left restriction semigroup with range (resp. right restriction semigroup with domain), then the Ehresmann order $\leq_e$ on $(S,\cdot,D,R)$ satisfies (OS4A) (resp. (OS4B)).  Conversely, if $(S,\cdot,D,R,\leq)$ is an ordered Ehresmann semigroup satisfying (OS4A) (resp. (OS4B)), then it is a left restriction semigroup with range (resp. right restriction semigroup with domain) in which $\leq$ equals $\leq_e$.  

Hence if $(S,\cdot,D,R)$ is a restriction semigroup, then $(S,\cdot,D,R,\leq_e)$ is an ordered Ehresmann semigroup satisfying (OS4A) and (OS4B).  Conversely, if $(S,\cdot,D,R,\leq)$ is an ordered Ehresmann semigroup satisfying (OS4A) and (OS4B), then it is a restriction semigroup in which $\leq$ equals $\leq_e$.
\end{pro}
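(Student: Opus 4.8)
The plan is to reduce everything to the two one-sided halves. A restriction semigroup is by definition both a left restriction semigroup with range and a right restriction semigroup with domain, and conversely a structure satisfying both (OS4A) and (OS4B) will, by the two one-sided converses, be both; so the restriction-semigroup assertions (second paragraph of the statement) follow at once from the one-sided ones, and by the left--right symmetry of the hypotheses it is enough to treat the ``left'' case.

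\emph{Forward direction.} Suppose $(S,\cdot,D,R)$ is a left restriction semigroup with range. It is de Barros (noted before the proposition), so by Proposition \ref{leqe} the relation $\leq_e$ is an Ehresmann order and by Proposition \ref{deBarros} the ordered Ehresmann semigroup $(S,\cdot,D,R,\leq_e)$ satisfies (OS4). To get the stronger (OS4A) I would show that $s\leq_e t$ forces $s=D(s)t$. Starting from $s=D(s)tR(s)$ (Lemma \ref{leqe0}) and using $R(s)=D(R(s))$ (by (L2)), the defining law $sD(u)=D(su)s$ rewrites $tR(s)$ as $D(tR(s))t$; a short computation with (L3) and the identity $D(eu)=eD(u)$ for $e\in D(S)$ (recalled in Subsection 2.1) then shows the projection factor $D(tR(s))$ is absorbed by $D(s)$, leaving $s=D(s)t$. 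Given this, if in addition $D(s)=D(t)$ then $s=D(s)t=D(t)t=t$, which is (OS4A).

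\emph{Converse direction.} Suppose $(S,\cdot,D,R,\leq)$ is an ordered Ehresmann semigroup satisfying (OS4A). Since (OS4A) trivially implies (OS4), Proposition \ref{deBarros} gives that $(S,\cdot,D,R)$ is de Barros and $\leq\,=\,\leq_e$; it remains to prove $sD(t)=D(st)s$ for all $s,t\in S$. The key identity is $D(st)\,s\,D(t)=sD(t)$, which holds because $D(st)=D(sD(t))$ by (L3) and then (L1) applies to $D(sD(t))\cdot sD(t)$; hence $sD(t)=\bigl(D(st)s\bigr)D(t)\leq_e D(st)s$ by (OS6), which $\leq_e$ satisfies (Proposition \ref{leqe}). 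On the other hand $D(sD(t))=D(st)$ by (L3), while $D(D(st)s)=D(st)D(s)=D(st)$ using the identity $D(eu)=eD(u)$ together with the standard inequality $D(st)\leq D(s)$ (itself a consequence of (L1) and that identity); thus $sD(t)$ and $D(st)s$ have equal domain. Applying (OS4A) to $sD(t)\leq D(st)s$ then yields $sD(t)=D(st)s$, and $\leq\,=\,\leq_e$ has already been established. The dual argument proves the ``right'' case with (OS4B), and combining the two gives the restriction-semigroup statements.

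The only real work here is in the Ehresmann-semigroup identity manipulations, and I expect the main obstacle to be exactly that bookkeeping: verifying $D(st)\,s\,D(t)=sD(t)$ and $D(D(st)s)=D(st)$ cleanly, and — most delicately — making sure the comparison $sD(t)\leq_e D(st)s$ points in the right direction, since (OS4A) only lets one compare an element with one lying above it.
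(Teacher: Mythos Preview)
Your proposal is correct and follows essentially the same route as the paper. The only cosmetic difference is in the forward direction: the paper invokes the earlier-stated fact that in a left restriction semigroup with range one has $\leq_e=\leq_l$, so $s\leq_e t$ immediately gives $s=D(s)t$, whereas you derive $s=D(s)t$ by hand from $s=D(s)tR(s)$ and the law $tD(u)=D(tu)t$; your converse argument (compare $sD(t)\leq D(st)s$, check equal domains, apply (OS4A)) is exactly the paper's.
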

\begin{proof}  
Suppose $(S,\cdot,D,R)$ is a left restriction semigroup with range.  As noted prior to Proposition \ref{deBeq}, $S$ is de Barros and so $\leq_e$ is an Ehresmann order on $(S,\cdot,D,R)$. It remains to check (OS4A).  But if $s\leq_e t$ and $D(s)=D(t)$, then $s=D(s)t=D(t)t=t$.

Conversely, suppose $(S,\cdot,D,R,\leq)$ is an ordered Ehresmann semigroup satisfying (OS4A).  Then it satisfies (OS4), and so by Proposition \ref{deBarros}, it is de Barros, and $\leq$ is $\leq_e$.  Moreover, for all $x,y\in S$, $xD(y)=D(xD(y))xD(y)=D(xy)xD(y)\leq D(xy)x$, and 
$$D(xD(y))=D(xy)=D(D(x)xy)=D(D(x)D(xy))=D(D(xy)D(x))=D(D(xy)x),$$ 
so by (OS4A), $xD(y)=D(xy)x$.  Hence, $(S,\cdot,D,R)$ is a left restriction semigroup with range.  The dual case now follows also, and the second claim follows by combining the two one-sided cases.
\end{proof}

\section{An ESN-style theorem for ordered Ehresmann semigroups}

\subsection{Motivation}

One approach to obtaining an ESN-style theorem for ordered Ehresmann semigroups would be to be guided by \cite{law2}, simply building in order information to a category with Ehresmann biaction.  However, we are mainly interested in a second more elegant approach, which although necessarily equivalent to the one just outlined, is more along the lines of \cite{law1}. 

Let $(S,\cdot,D,R)$ be an Ehresmann semigroup with $e,f\in D(S)$, and $s\in S$.  The approach used in \cite{law1} rested on the observation that if $e\leq D(s)$ and $f\leq R(s)$ then $es$ is the unique $t\in S$ for which $t\leq_l s$ and $D(t)=e$, and dually for $sf$.  However, if $\leq$ is an Ehresmann order on $S$, then $es$ is the {\em largest} $t\in S$ (with respect to $\leq$) for which $t\leq s$ and $D(t)\leq e$, and then $D(t)=e$; dually for $sf$. Therefore, part of our approach is to replace (OC8a) and (OC8b) applying to $\leq_l$ and $\leq_r$ respectively by (OC6) applying to $\leq$.  A relatively small number of order-theoretic properties of the category with partial order $(S,\cdot,D,R,\leq)$ then suffice to characterise the partially ordered categories that arise in this way from ordered Ehresmann semigroups.  Remarkably, this approach gives simpler laws than those of Ehresmann categories which model {\em unordered} Ehresmann semigroups, but it is also simpler than an approach in which one enriches the structure of a category with Ehresmann biaction. 

\subsection{Ehresmann-ordered categories}

We now turn to the category-theoretic analogs of ordered Ehresmann semigroups we shall work with here.   To do this, we must consider a further possible property of an $\Omega$-structured category $(C,\circ,D,R,\leq)$, which is a weakening of (OC7):
\ben
\item[\textup{(OC7')}] if $a\leq b\circ c$, then there are $b'\leq b,c'\leq c$ such that $D(a)=D(b'), R(a)=R(c')$, $b'\circ c'$ exists, and $a\leq b'\circ c'$.
\een

If an $\Omega$-structured category satisfies (OC4) and (OC7') then it is easy to see that it satisfies (OC7) as well.  In the general case, we are interested in cases satisfying (OC7') but not necessarily (OC7), hence perhaps not (OC4) either.  

\begin{dfn}  \label{Ehrsordcat}
We say an $\Omega$-structured category $(C,\circ,D,R,\leq)$ is an {\em Ehresmann-ordered category} if:  
\bi
\item it satisfies (OC6), (OC7') and (OCI);
\item $(D(C),\leq)$ is a meet-semilattice.
\ei
\end{dfn}

This definition parallels fairly closely that of ordered Ehresmann semigroups, with only (OC7') and the meet-semilattice assumption having no parallels (although the latter parallels one of the properties of Ehresmann semigroups).  We contrast this relatively simple definition with that of Ehresmann categories as in Definition \ref{Ehrscat}.  Here, there is only one partial order and the first two laws for Ehresmann categories are replaced by the first law above, and only the meet-semilattice property of $(D(C),\leq)$ is retained from all the other laws for Ehresmann categories.

In fact, every ordered Ehresmann semigroup yields an Ehresmann-ordered category.

\begin{pro}  \label{corresp0}
If $(S,\cdot,D,R,\leq)$ is an ordered Ehresmann semigroup, then defining $\circ$ as in Definition \ref{pp} makes $\C(S)=(S,\circ,D,R,\leq)$ an Ehresmann-ordered category in which for all $a\in S$, if $e\leq D(a)$ then $e|a=ea$, and if $e\leq R(a)$ then $a|e=ae$.
\end{pro}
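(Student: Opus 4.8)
The plan is to verify in turn that $\C(S)$ is an $\Omega$-structured category, that it satisfies (OC6), (OC7') and (OCI), that $(D(S),\le)$ is a meet-semilattice, confirming the formulas $e|a=ea$ and $a|e=ae$ along the way. Most of this is a direct translation of the semigroup axioms. That $(S,\circ,D,R)$ is a category was already noted after Definition \ref{pp}, and $(S,\le)$ is a poset by (OS1); axiom (OC2) is exactly (OS2); and since $a\circ c$ and $b\circ d$, when defined, are the semigroup products $ac$ and $bd$, axiom (OC3) is immediate from (OS3). Axiom (OCI) is literally (OSI). Finally $S$ is Ehresmann, so $D(S)$ is a semilattice under multiplication, and by the Proposition following the definition of ordered localisable semigroup the restriction of $\le$ to $D(S)$ is the semilattice order; hence $(D(S),\le)$ is a meet-semilattice with $e\wedge f=ef$.

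Next I would treat (OC6) together with the ``moreover'' clause. Fix $a\in S$ and $e\in D(S)$ with $e\le D(a)$; the claim is that $e|a$ exists and equals $ea$. Indeed $ea\le a$ by (OS6); and $D(ea)=D(eD(a))=D(e)=e$ by (L3), the semilattice identity $eD(a)=e$, and $D(e)=e$, so in particular $D(ea)\le e$. For maximality, if $y\le a$ with $D(y)\le e$ then $y=D(y)y=(eD(y))y=ey\le ea$ by (OS3). Hence $ea=\max\{y\le a\mid D(y)\le e\}=e|a$ and $D(e|a)=e$; the claim $a|e=ae$ for $e\le R(a)$ is dual.

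The real work is (OC7'), which I expect to be the main obstacle. Suppose $a\le b\circ c$, so $R(b)=D(c)$ and $b\circ c=bc$. From (OS2) and the category identities $D(b\circ c)=D(b)$, $R(b\circ c)=R(c)$ one gets $D(a)\le D(b)$ and $R(a)\le R(c)$, whence by (L1) and (OS3),
\[ a=D(a)\,a\,R(a)\le D(a)(bc)R(a)=(D(a)b)(cR(a)). \]
Writing $p=D(a)b$ and $q=cR(a)$, we have $a\le pq$ with $p\le b$, $q\le c$ by (OS6), and $D(p)=D(a)$, $R(q)=R(a)$ (computed from (L3) and the semilattice relations $D(a)D(b)=D(a)$, $R(c)R(a)=R(a)$). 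The obstruction is that $p\circ q$ need not be defined, since $R(p)$ and $D(q)$ may differ. I would repair this by trimming each factor by the other's boundary projection: set $b'=pD(q)$ and $c'=R(p)q$. A short computation with (L3) and the commutativity of projections gives $R(b')=R(p)D(q)=D(c')$, so $b'\circ c'$ is defined; and using $D(q)q=q$, $pR(p)=p$ and commutativity of projections, $b'c'=pD(q)R(p)q=pR(p)D(q)q=pq$. Thus $a\le pq=b'\circ c'$, while $b'\le p\le b$ and $c'\le q\le c$ by (OS6). Finally (OS2) applied to $a\le b'\circ c'$ gives $D(a)\le D(b'\circ c')=D(b')$, and $b'\le p$ forces $D(b')\le D(p)=D(a)$, so $D(b')=D(a)$; dually $R(c')=R(a)$. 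This is exactly (OC7').

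The only genuinely delicate point is this middle step of (OC7'): the naive factorisation through $(D(a)b)(cR(a))$ ignores the category's composability requirement, and one must shrink both factors so their boundary projections agree while keeping the outer boundaries $D(a)$ and $R(a)$ intact; the identity $pD(q)R(p)q=pq$ is precisely what makes this possible. Everything else reduces to routine unwinding of the laws (L1)--(L4) and the order axioms (OS1)--(OSI).
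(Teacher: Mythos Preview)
Your proof is correct and matches the paper's argument closely for the $\Omega$-structured category axioms, (OCI), the semilattice property, and (OC6).

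For (OC7') the paper takes a slightly different route. It first fixes only the left side: with $z\le x\circ y$ it sets $x'=D(z)x$ and $y'=R(D(z)x)\,y$, checks directly that $R(x')=D(y')$ so that $x'\circ y'$ exists with $D(x')=D(z)$ and $z\le x'\circ y'$, and then invokes the dual argument applied to $z\le x'\circ y'$ to adjust the range side, finally observing that the domain equality $D(x'')=D(z)$ survives the second pass. Your approach is more symmetric: you first pin down both outer boundaries via $p=D(a)b$, $q=cR(a)$, and then repair composability in a single trimming step $b'=pD(q)$, $c'=R(p)q$, using the pleasant identity $pD(q)R(p)q=pR(p)D(q)q=pq$ (commutativity of projections) to see that $b'\circ c'=pq\ge a$. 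Both arguments are short and rely on the same ingredients; yours avoids the ``apply the dual and check nothing was lost'' step, while the paper's avoids having to verify that the outer boundaries $D(b'),R(c')$ are unchanged after the inner trim. Either way the substance is the same.
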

\begin{proof}
Suppose $(S,\cdot,D,R)$ is an Ehresmann semigroup.  That $(S,\circ,D,R)$ is a category has been discussed already.  Suppose $\leq$ is an Ehresmann order on $(S,\cdot,D,R)$.  Only (OC6) and (OC7') are not obvious in showing that $(S,\circ,D,R,\leq)$ is an Ehresmann-ordered category.  

If $e\leq D(a)$, then $e|a=ea\leq a$ and $D(e|a)=D(ea)=D(eD(a))=D(e)=e$, and if $y\leq a$ with $D(y)\leq e\leq D(a)$, then $y=D(y)y\leq ea=e|a$.   Dually for $a|e$ if $e\leq R(a)$.  So (OC6) holds.  

For (OC7'), suppose $z\leq x\circ y$, so $D(z)\leq D(x\circ y)=D(x)$.  Let $x'=D(z)x\leq x$ and $y'=R(D(z)x)y\leq y$.  Then because  $R(D(z)x)\leq R(x)=D(y)$, we obtain $$D(y')=D(R(D(z)x)y)=R(D(z)x)D(y)=R(D(z)x)=R(x'),$$ so $x'\circ y'$ exists and $x'\circ y'\leq x\circ y$.  Moreover $$z=D(z)z\leq D(z)xy=D(z)xR(D(z)x)y=x'\circ y',$$ with $$D(x'\circ y')=D(x')=D(D(z)x)=D(D(z)D(x))=D(D(z))=D(z).$$   Applying the dual argument to $z\leq x'\circ y'$ gives $x''<x'<x, y''<y'<y$ such that $z\leq x''\circ y''$ and $R(y'')=R(z)$, whilst $D(z)\leq D(x'')\leq D(x')=D(z)$, so $D(z)=D(x'')$, as required.
\end{proof}

Recall that a morphism between ordered Ehresmann semigroups is an order-preserving semigroup homomorphim that respects $D$ and $R$.  

\begin{dfn}  \label{ecmorph} 
A {\em morphism} between Ehresmann-ordered categories $(C_1,\circ,D,R,\leq)$ and $(C_2,\circ,D,R,\leq)$ is a functor $F:(C_1,\circ,D,R)\rightarrow (C_2,\circ,D,R)$ satisfying the following:
\bi
\item if $s,t\in C_1$ and $s\leq t$ then $sF\leq tF$;
\item if $e,f\in D(C_1)$ then $(e\wedge f)F=eF\wedge fF$;
\item if $s\in C_1$ and $e\leq D(s)$ then $(e|s)F=(eF)|(sF)$, and dually if $f\leq R(s)$ then $(s|f)F=(sF)|(fF)$.
\ei
\end{dfn}

The definition used by Lawson in \cite{law1} omitted the final requirement given above, but as shown there in the proof of Lemma 4.23, it follows from the first two requirements, at least in that setting, and the argument carries over easily to the current setting if (OC8) is assumed.  In fact (OC8) is necessary for this simplification, as the following example shows.  

Let $X$ be a set with at least two elements, with $S=\{0,1,\nabla\}\subseteq \Rel(X)$, where $0$ is the empty relation, $1$ is the diagonal relation (identity function on $X$) and $\nabla$ is the full relation on $X$.  Evidently, $S$ is an ordered Ehresmann subsemigroup of $(\Rel(X),\cdot,D,R,\subseteq)$.  Define $F: S\rightarrow S$ as follows: $0F=1F=1, \nabla F=\nabla$.  This is routinely checked to be order-preserving, to preserve all category products on $\C(S)$, to respect $D$ and $R$, and to preserve all meets of members of $D(S)$.  So, viewed as a functor $F: \C(S)\rightarrow \C(S)$ on the Ehresmann-ordered category $\C(S)$ (upon using Proposition \ref{corresp0}), it satisfies the first two conditions of a morphism.  But note that $0\leq 1=D(\nabla)$, and $0|\nabla=0$, so $(0|\nabla)F=0F=1$, whereas $(0F)|(\nabla F)=1|\nabla=\nabla$, so the third is not satisfied.  (It follows that (OC8) is not satisfied, which can be seen directly because both $1$ and $\nabla$ are below $\nabla$ even though both have domain $1$.)

Consider the Ehresmann semigroup $S$ as in Example \ref{twoord}, along with the two Ehresmann orders on it, $\leq_1$ (in which $1\leq_1 0$) and $\leq_2$ (the equality relation).  Then we know that $(S,\circ,D,R,\leq_1)$ and $(S,\circ,D,R,\leq_2)$ are Ehresmann-ordered categories by Proposition \ref{corresp0}.  The identity mapping $I: S\rightarrow S$ evidently satisfies the second and third conditions in Definition \ref{ecmorph}, but fails to satisfy the first, since $1\leq_1 0$ yet $1\not\leq_2 0$.  So the first condition in Definition \ref{ecmorph} cannot in general be inferred from the second and third either.

Clearly morphisms between Ehresmann-ordered categories are closed under composition, and all identity mappings on Ehresmann-ordered categories are morphisms, and we define the category of Ehresmann-ordered categories to have as its arrows the morphisms between Ehresmann-ordered categories.  

\begin{thm}  \label{correspa}
Suppose $(C,\circ,D,R,\leq)$ is an Ehresmann-ordered category.  Then it has Ehresmann biaction in which $D(C)$ is ordered by $\leq$, and for all $e\in D(C)$ and $x\in S$,
$$e\cdot x=(e\wedge D(x))|x, x\cdot e=x|(R(x)\wedge e).$$
\end{thm}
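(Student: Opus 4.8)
The strategy is to verify each of the six Ehresmann biaction axioms (E1)--(E6) directly, taking the operations $e\cdot x=(e\wedge D(x))|x$ and $x\cdot e=x|(R(x)\wedge e)$ as defined via the restriction/corestriction maps guaranteed by (OC6). Throughout I would exploit the fact (from Proposition \ref{corresp0} read in reverse, or directly from (OC6)) that $e|x$ is characterised as $\max\{y\le x\mid D(y)\le e\}$ with $D(e|x)=e\wedge D(x)$ when $e\wedge D(x)$ is substituted, so I would first record the basic identities: $D((e\wedge D(x))|x)=e\wedge D(x)$, $(e\wedge D(x))|x\le x$, and the ``collapsing'' behaviour $D(x)|x=x$. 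The key small lemma to isolate early is a transitivity/composition law for iterated restrictions: $(e\wedge D(x))|((f\wedge D(x))|x)=((e\wedge f)\wedge D(x))|x$, which I would derive from the maximality characterisation in (OC6) together with the fact that $(D(C),\le)$ is a meet-semilattice. This lemma immediately gives (E1) (that the left action is an action) and its dual for the right action.

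With these preliminaries, (E2) is mostly bookkeeping: $D(a)\cdot a=(D(a)\wedge D(a))|a=D(a)|a=a$, and $D(e\cdot a)=D((e\wedge D(a))|a)=e\wedge D(a)$ by the basic identity; dually for the right action using $R$. For (E4), when $a\in D(C)$ one has $e\cdot a=(e\wedge a)|a$, and since $e\wedge a\le a=D(a)$ (identities being their own domain) and $a\le a$ with $D(a)=a$, one checks $(e\wedge a)|a=e\wedge a$ using (OCI) — the element $(e\wedge a)|a$ lies below $a\in D(C)$ hence is in $D(C)$, and it is the maximum such element below $a$ with domain $\le e\wedge a$, which forces it to equal $e\wedge a$; dually for $a\cdot e$. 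For (E5), $R(e\cdot a)\le R(a)$ and $D(a\cdot e)\le D(a)$ follow from (OC2) applied to $e\cdot a\le a$ and $a\cdot e\le a$ respectively.

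The two genuinely substantive axioms are (E3) — the biaction compatibility $(e\cdot a)\cdot f=e\cdot(a\cdot f)$ — and (E6) — the interaction with composition. For (E3) I would again use the maximality characterisations: both sides are the maximum element $y\le a$ with $D(y)\le e$ and $R(y)\le f$ (or rather $\le e\wedge D(a)$ and $\le f\wedge R(a)$), and uniqueness of maxima closes it, though one must be a little careful that restricting then corestricting a maximal element is the same as corestricting then restricting — this is where I expect the main friction, and it may require invoking (OC3) (order-compatibility of composition) together with the defining property of restrictions to show the two iterated operations have the same value. The hardest step is (E6): given $a\circ b$ exists, one must show $e\cdot(a\circ b)=(e\cdot a)\circ(R(e\cdot a)\cdot b)$ and the dual. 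Here I would first check the right-hand side is even defined — $R(e\cdot a)=D(R(e\cdot a)\cdot b)$ follows from (E2) since $R(e\cdot a)\le R(a)=D(b)$ — then argue both sides are $\le a\circ b$ and have the correct domain and range, and finally use (OC7') to write any $z\le a\circ b$ with $D(z)\le e$ as (below) a composite $b'\circ c'$ with matching domain/range, deducing $b'\le e\cdot a$ and $c'\le R(e\cdot a)\cdot b$ by maximality, hence $z\le (e\cdot a)\circ(R(e\cdot a)\cdot b)$ via (OC3); combined with the reverse inequality (which follows since $(e\cdot a)\circ(R(e\cdot a)\cdot b)\le a\circ b$ by (OC3) and has domain $e\wedge D(a\circ b)$), the maximality characterisation of $e\cdot(a\circ b)$ forces equality. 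The dual identity for $(a\circ b)\cdot e$ is handled symmetrically, and this completes the verification that $C$ carries an Ehresmann biaction.
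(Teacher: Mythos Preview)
Your approach is essentially the same as the paper's: first establish the universal characterisation $y\le e\cdot x$ iff $y\le x$ and $D(y)\le e$ (and dually), then verify (E1)--(E6) directly from it, with (OC7') supplying exactly the decomposition needed for (E6). Your anticipated ``friction'' in (E3) does not arise --- once the universal property is in hand, both $(e\cdot a)\cdot f$ and $e\cdot(a\cdot f)$ are immediately the maximum $y\le a$ with $D(y)\le e$ and $R(y)\le f$, so no separate appeal to (OC3) is needed there (and your preliminary ``iterated restriction'' lemma is likewise subsumed by the universal property).
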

\begin{proof}
First, note that $e\cdot x$ exists, and $e\cdot x\leq x$ and $D(e\cdot x)\leq e$.   Moreover if $y$ is such that $y\leq x, D(y)\leq e$, then because $D(y)\leq D(x)$, we have that $D(y)\leq e\wedge D(x)$, so $y\leq (e\wedge D(x))|x=e\cdot x$.  Trivially $D(e\cdot x)=e\wedge D(x)$, so $e\cdot x$ is the largest $y\leq x$ for which $D(y)\leq e$, and then $D(e\cdot x)=e\wedge D(x)$.  Dually for $x\cdot e$.

Now if $e,f\in D(C)$ and $a\in C$, then we must show that $(e\wedge f)\cdot a=e\cdot(f\cdot a)$.  Suppose $y\leq (e\wedge f)\cdot a$.  Then $y\leq a$, $D(y)\leq D(a)\wedge e\wedge f\leq D(a)\wedge f$ and so $y\leq f\cdot a$, and since $D(y)\leq D(a)\wedge e\wedge f=D(f\cdot a)\wedge e\leq e$, we have that $y\leq e\cdot(f\cdot a)$.  Conversely, if $y\leq e\cdot(f\cdot a)$ then $y\leq f\cdot a$ and $D(y)\leq D(f\cdot a)\wedge f=D(a)\wedge e\wedge f$, so $y\leq (e\wedge f)\cdot a$.   Hence $e\cdot(f\cdot a)=(e\wedge f)\cdot a$. Dualise to get that $D(C)$ acts on the right using corestriction.  The rest of (E1) is satisfied by Propostion \ref{rest}.  

For $e\in D(C)$ and $a\in C$, $D(e\cdot a)=D(e)\wedge a$ by Propostion \ref{rest}, and clearly $D(a)\cdot a=a$.  Dualising gives (E2).  

For (E3), we must show that $e\cdot(a\cdot f)=(e\cdot a)\cdot f$ for all $a\in C$ and $e,f\in D(C)$.  But for $a,y\in C$ and $e,f\in D(C)$, the following are equivalent: $y\leq (e\cdot a)\cdot f$; $y\leq e\cdot a$ and $R(y)\leq f$; $y\leq a$, $D(y)\leq e$ and $R(y)\leq f$; $y\leq a\cdot f$ and $D(y)\leq e$; $y\leq e\cdot(a\cdot f)$.

For (E4), Proposition \ref{rest} (and its dual) again gives the desired results, and (E5) follows from monotonicity of $D$ and $R$.  

For (E6), we must show that $e\cdot(x\circ y)=(e\cdot x)\circ (R(e\cdot x)\cdot y)$,and dually.  Suppose $z\leq e\cdot(x\circ y)$.  Then $z\leq x\circ y$ and $D(z)\leq e$.  By assumption, there are $x'\leq x,y'\leq y$ for which $z\leq x'\circ y'$ and $D(x')=D(z)\leq e$, so $x'\leq e\cdot x$, and so $D(y')=R(x')\leq R(e\cdot x)$, so $y'\leq R(e\cdot x)\cdot y$.  So $z\leq x'\circ y'\leq (e\cdot x)\circ (R(e\cdot x)\cdot y)$ (the latter existing since $D(R(e\cdot x)\cdot y)=R(e\cdot x)\wedge D(y)=R(e\cdot x)$ since $R(e\cdot x)\leq R(x)=D(y)$).  So $e\cdot(x\circ y)\leq (e\cdot x)\circ (R(e\cdot x)\cdot y)$.  Conversely, it is evident that $(e\cdot x)\circ R(e\cdot x)\cdot y\leq x\circ y$, and because $D((e\cdot x)\circ R(e\cdot x)\cdot y)=D(e\cdot x)=e\wedge D(x)\leq e$, we have that $(e\cdot x)\circ R(e\cdot x)\cdot y\leq e\cdot(x\circ y)$.  The dual case follows similarly.
\end{proof}

A notion of functor between transcription categories was defined in \cite{fitzkin}, and therefore applies to categories with Ehresmann biaction.  Thus, $F:C_1\rightarrow C_2$ is such a functor providing it is a category functor additionally respecting the biaction, so that $(e\cdot s)F=eF\cdot sF$ and dually, for any $e\in D(C_1)$ and $s\in C_1$.
 
\begin{pro}  \label{istransfunc}
Let $(C_1,\circ,D,R,\leq)$ and $(C_2,\circ,D,R,\leq)$ be Ehresmann-ordered categories.  If $F$ is an Ehresmann-ordered category morphism from the first to the second, then it satisfies $(e\cdot s)F=(eF)\cdot (sF)$ for all $e\in D(C_1)$ and $s\in C_1$.
\end{pro}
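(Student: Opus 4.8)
The plan is to reduce the claim to the explicit description of the Ehresmann biaction supplied by Theorem \ref{correspa}. In any Ehresmann-ordered category the left action is $e\cdot x=(e\wedge D(x))|x$, so for $e\in D(C_1)$ and $s\in C_1$ we have $e\cdot s=(e\wedge D(s))|s$; since $e\wedge D(s)\leq D(s)$, this is a genuine restriction, and the third clause of Definition \ref{ecmorph} (preservation of restrictions) applies to it directly.

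First I would apply that clause: because $e\wedge D(s)\leq D(s)$,
$$(e\cdot s)F=\big((e\wedge D(s))|s\big)F=\big((e\wedge D(s))F\big)|(sF).$$
Next, $F$ is a functor, hence sends identities to identities (so that $eF\in D(C_2)$) and satisfies $D(s)F=D(sF)$; combining this with the second clause of Definition \ref{ecmorph} (preservation of binary meets of identities) gives $(e\wedge D(s))F=(eF)\wedge(D(s)F)=(eF)\wedge D(sF)$. Substituting back,
$$(e\cdot s)F=\big((eF)\wedge D(sF)\big)|(sF),$$
which is precisely $(eF)\cdot(sF)$ by the formula of Theorem \ref{correspa} read in $C_2$. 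The dual identity $(s\cdot e)F=(sF)\cdot(eF)$ then follows by the mirror-image argument, using $s\cdot e=s|(R(s)\wedge e)$, the corestriction-preservation clause, meet-preservation, and $R(s)F=R(sF)$.

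The argument is a short chain of rewrites, so I do not expect any genuine obstacle; the only points needing care are bookkeeping ones — that functoriality alone forces $eF\in D(C_2)$ and that $F$ commutes with $D$ and $R$, that $(e\wedge D(s))|s$ really has the form demanded by the restriction-preservation clause of Definition \ref{ecmorph}, and that the order-preservation clause is in fact not used here. I would close by recording the upshot: an Ehresmann-ordered category morphism is in particular a functor of transcription categories (equivalently, of categories with Ehresmann biaction) in the sense of \cite{fitzkin}, which is the form in which this fact will be used in the sequel.
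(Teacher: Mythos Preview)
Your proof is correct and follows exactly the same chain of rewrites as the paper's own proof: expand $e\cdot s$ via Theorem~\ref{correspa}, apply restriction-preservation, then meet-preservation and $D(s)F=D(sF)$, and contract back to $(eF)\cdot(sF)$. The only difference is expository --- you spell out the justifications for each step and note the dual, whereas the paper presents the five-line display without commentary.
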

\begin{proof}
For all $e\in D(C_1)$ and $s\in C_1$,
\bea
(e\cdot s)F&=&((e\wedge D(s))|s)F\\
&=&((e\wedge D(s))F)|(sF)\\
&=&(eF\wedge D(s)F)|(sF)\\
&=&(eF\wedge D(sF))|(sF)\\
&=&(eF)\cdot (sF),
\eea
as required.
\end{proof}

Hence an Ehresmann-ordered category morphism is a functor between the induced categories with Ehresmann biaction (given by Theorem \ref{correspa}).

\subsection{The ESN-style Theorem}

First, we observe that there is a converse to Proposition \ref{corresp0}.

\begin{pro}  \label{corresp1}
If $(C,\circ,D,R,\leq)$ is an Ehresmann-ordered category, then setting $$s\otimes t=s|(R(s)\wedge D(t))\circ (R(s)\wedge D(t))|t,$$ 
$(C,\otimes, D,R,\leq)$ is an ordered Ehresmann semigroup.  
\end{pro}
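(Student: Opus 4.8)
The plan is to recognise that the operation $\otimes$ is nothing other than the pseudoproduct of~\cite{law2}, obtain the underlying Ehresmann semigroup from there, and then verify the five order laws \ref{OS1}, \ref{OS2}, \ref{OS3}, (OS6) and (OSI) one at a time.

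First I would invoke Theorem~\ref{correspa}: the Ehresmann-ordered category $(C,\circ,D,R,\leq)$ carries an Ehresmann biaction with $e\cdot x=(e\wedge D(x))|x$ and $x\cdot e=x|(R(x)\wedge e)$. Since $R(s)\wedge D(t)\leq R(s)$ and $R(s)\wedge D(t)\leq D(t)$, these formulas give $s\cdot D(t)=s|(R(s)\wedge D(t))$ and $R(s)\cdot t=(R(s)\wedge D(t))|t$, so that $s\otimes t=(s\cdot D(t))\circ(R(s)\cdot t)$ is exactly the pseudoproduct $s\bullet t$. By the result recalled from~\cite{law2} (equivalently~\cite{fitzkin}), $(C,\otimes,D,R)$ is therefore an Ehresmann semigroup, with the same operations $D,R$ and the same semilattice of projections $D(S)=D(C)$. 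It remains only to check the order axioms.

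Of these, \ref{OS1} is immediate (the semigroup part just noted, together with the partial order from the $\Omega$-structure), \ref{OS2} is precisely (OC2), and (OSI) is precisely (OCI). For (OS6), I would first simplify, for $e\in D(C)$: a short computation (using (E4), say) shows $(R(a)\wedge e)|e=R(a)\wedge e$ and $e|(e\wedge D(a))=e\wedge D(a)$, and absorbing these identity factors via $x\circ R(x)=x$ and $D(x)\circ x=x$ yields $a\otimes e=a|(R(a)\wedge e)$ and $e\otimes a=(e\wedge D(a))|a$; both of these lie below $a$ directly from the definitions of corestriction in (OC6b) and of restriction in (OC6a).

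The one law needing real work is \ref{OS3}, and this is the main obstacle. Suppose $a\leq b$ and $c\leq d$; by (OC2), $R(a)\leq R(b)$ and $D(c)\leq D(d)$, so $e:=R(a)\wedge D(c)\leq R(b)\wedge D(d)=:f$, and $a\otimes c=a|e\circ e|c$, $b\otimes d=b|f\circ f|d$. The key is the extremal description of the biaction recorded in the proof of Theorem~\ref{correspa}: $a|e=a\cdot D(c)=\max\{y\leq a\mid R(y)\leq D(c)\}$, and dually $e|c=R(a)\cdot c=\max\{y\leq c\mid D(y)\leq R(a)\}$ (likewise for $b,d$). Since $a\cdot D(c)\leq a\leq b$ and $R(a\cdot D(c))\leq D(c)\leq D(d)$, the element $a\cdot D(c)$ lies in $\{y\leq b\mid R(y)\leq D(d)\}$, whence $a|e\leq b|f$; dually $e|c\leq f|d$. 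Finally, both composites $a|e\circ e|c$ and $b|f\circ f|d$ are defined, since by (OC6) $R(a|e)=e=D(e|c)$ and $R(b|f)=f=D(f|d)$; so applying the monotonicity of composition (OC3) to the comparable pairs $a|e\leq b|f$ and $e|c\leq f|d$ gives $a\otimes c=a|e\circ e|c\leq b|f\circ f|d=b\otimes d$. The delicate points in this argument are having the ``$\max$'' descriptions of restriction and corestriction at hand (they come for free from the proof of Theorem~\ref{correspa}) and checking that the two partial products exist before appealing to (OC3); the rest is routine.
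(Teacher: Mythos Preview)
Your proposal is correct and follows essentially the same approach as the paper: both invoke Theorem~\ref{correspa} to obtain the Ehresmann biaction, then cite Lawson's result from~\cite{law2} to conclude that $(C,\otimes,D,R)$ is an Ehresmann semigroup, and then verify the order axioms, identifying \ref{OS3} as the only one requiring real work. Your argument for \ref{OS3} is in fact slightly more streamlined than the paper's --- you pass directly from $a|e$ to $b|f$ via the extremal description of the biaction, whereas the paper interposes an intermediate composite $(s_1|e)\circ(e|s_2)$ --- but the underlying idea is identical.
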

\begin{proof}
Suppose $C$ is an Ehresmann-ordered category.  Then it induces an Ehresmann biaction by Theorem \ref{correspa}, and hence by Theorem 2.7 of \cite{law2} can be made into an Ehresmamm semigroup $\Se(C)$ under $\otimes$ as given.  Equip $\Se(C)$ with the partial order of $C$.  Most of the properties of an Ehresmann order are easily seen, the only non-obvious one being
$s_1\leq s_2, t_1\leq t_2\Rightarrow s_1s_2\leq t_1t_2$.  But $t_1t_2=t_1\otimes t_2=(t_1| e)\circ (e|t_2)\leq (s_1|e)\circ (e|s_2)$ (where $e=R(t_1)\wedge D(t_2)$, noting that $e\leq R(s_1),D(s_2)$), which in turn is below $s_1\otimes s_2=(s_1\cdot f)\circ (f\cdot s_2)$ where $e\leq f=R(s_1)\wedge D(s_2)$.
\end{proof}

\begin{thm}  \label{corresp}
The two constructions given in Theorem \ref{corresp0} and \ref{corresp1} are mutually inverse, and indeed there is a category isomorphism between the category of ordered Ehresmann semigroups and the category of Ehresmann-ordered categories.
\end{thm}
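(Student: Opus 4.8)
The plan is to show the two constructions $\C$ and $\Se$ are mutually inverse on objects, and then upgrade this to a category isomorphism by checking they act as identity on morphisms as well (or, equivalently, that each is functorial and they compose to identities). The key point to exploit is that the underlying object-level bijection between Ehresmann semigroups and categories with Ehresmann biaction is already established in \cite{law2} (and \cite{fitzkin}); Theorems \ref{corresp0} and \ref{corresp1} simply carry the partial order along unchanged, so the heavy semigroup/category algebra has already been done. What remains is essentially bookkeeping about the order and about morphisms.

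First I would verify $\Se(\C(S)) = S$ as ordered Ehresmann semigroups. By \cite{law2}, the pseudoproduct $\otimes$ on $\C(S)$ recovers the original product of $S$; concretely, using Proposition \ref{corresp0}, $s\otimes t = s|(R(s)\wedge D(t)) \circ (R(s)\wedge D(t))|t = (sf)\circ(ft)$ where $f = R(s)\wedge D(t) = R(s)D(t)$, and since $R(sf) = R(R(s)f) = f = D(ft)$ this composite is defined and equals $sft = s(R(s)D(t))t = sD(t)t = st$ using the localisable laws. The operations $D,R$ are literally unchanged, and the partial order carried onto $\Se(\C(S))$ is by construction the order of $\C(S)$, which is the order of $S$. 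For the reverse direction $\C(\Se(C)) = C$: again by \cite{law2} the partial composition, $D$, and $R$ of $\C(\Se(C))$ agree with those of $C$ (the partial product $s\circ t$ of $\Se(C)$, defined when $R(s)=D(t)$, equals $s\otimes t$, which in that case is $s|R(s)\circ D(t)|t = s\circ t$ in $C$), and once more the order is transported identically. So the object-level correspondence is a bijection.

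Next I would check both constructions preserve and reflect morphisms. Given an ordered Ehresmann semigroup homomorphism $F:S\to T$, it is a fortiori a semigroup homomorphism preserving $D,R$ and preserving $\leq$, hence a functor $\C(S)\to\C(T)$ preserving $\leq$; it preserves meets in $D(S)$ since $e\wedge f = ef$ and $F$ is multiplicative; and it satisfies $(e|s)F = (esF) = (eF)(sF) = (eF)|(sF)$ (for $e\leq D(s)$) by Proposition \ref{corresp0}, with the dual statement for corestriction — so $F$ is an Ehresmann-ordered category morphism (Definition \ref{ecmorph}). Conversely, given such a morphism $G:C_1\to C_2$, Proposition \ref{istransfunc} shows $G$ respects the biaction, so by the morphism correspondence underlying \cite{law2}/\cite{fitzkin} it is a homomorphism for $\otimes$; it preserves $D,R$ (being a functor) and $\leq$ by assumption, so it is an ordered Ehresmann semigroup homomorphism $\Se(C_1)\to\Se(C_2)$. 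Since the underlying maps are never altered by either construction, these assignments on morphisms compose to identities, and both clearly respect composition and identities, giving mutually inverse functors and hence a category isomorphism.

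The main obstacle, and the only place requiring genuine care, is confirming that the third condition of Definition \ref{ecmorph} — preservation of restrictions and corestrictions — is both implied by (in the semigroup-to-category direction) and sufficient for (in the reverse direction) the biaction-preservation needed to invoke the known results; that is, one must be sure nothing is lost by packaging the morphism data via $\leq$ and $|$ rather than via the biaction $\cdot$. But Propositions \ref{corresp0} and \ref{istransfunc} together show $e|s$ and $e\cdot s$ determine each other (via $e\cdot x = (e\wedge D(x))|x$ and, conversely, $e|s = e\cdot s$ when $e\leq D(s)$), so the two formulations of "morphism" genuinely coincide, and the isomorphism follows.
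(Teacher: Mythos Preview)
Your proposal is correct and follows essentially the same strategy as the paper's proof: invoke the known object-level bijection from \cite{law2}/\cite{fitzkin} while noting the order is carried along unchanged, then establish the morphism correspondence in both directions using Proposition~\ref{istransfunc} and the biaction/transcription-category framework. If anything, you are slightly more careful than the paper in the forward direction, since you explicitly verify the restriction/corestriction preservation condition of Definition~\ref{ecmorph} via $(e|s)F=(es)F=(eF)(sF)=(eF)|(sF)$, whereas the paper only mentions order- and meet-preservation before concluding.
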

\begin{proof}
Viewed as Ehresmann semigroups and categories with Ehresmann biaction, these constructions are precisely those shown to be mutually inverse in \cite{law2} (and indeed in \cite{fitzkin}); since the order does not change under the construction, this carries over to when they are viewed as Ehresmann-ordered categories and ordered Ehresmann semigroups respectively.  To prove the category isomorphism, it therefore suffices to prove that for any two ordered Ehresmann semigroups $S,T$, a function $F:S\rightarrow T$ is an ordered Ehresmann semigroup homomorphism if and only if it is an Ehresmann-ordered category morphism $\C(S)\rightarrow \C(T)$.  If $F:S\rightarrow T$ is an ordered Ehresmann semigroup homomorphism, then it is certainly a functor $\C(S)\rightarrow \C(T)$, moreover one that is order-preserving (since the order has not changed) and meet-preserving in $D(\C(S))=D(S)$ (since meet in $D(\C(S))$ is just multiplication in $D(S)$), and so it is an Ehresmann-ordered category functor.  Conversely, if $F:\C(S)\rightarrow \C(T)$ is an Ehresmann-ordered functor, then by Proposition \ref{istransfunc}, it is a transcription category morphism in the sense of \cite{fitzkin}, and so by Theorem 4.8 there, it is an Ehresmann semigroup homomorphism, moreover one preserving the order, and hence is an ordered Ehresmann sermigroup homomorphism $S\rightarrow T$.  
\end{proof}  

If $(S,\cdot,D,R,\leq)$ is an ordered Ehresmann semigroup, then for all $s,t\in S$, $s\leq t$ if and only if $D(s)\leq D(t)$, $R(s)\leq R(t)$, and $s\leq D(s)tR(s)$, as is easily seen.  In the corresponding Ehresmann-ordered category $(S,\circ,D,R,\leq)$ (using Theorem \ref{corresp}), we have $D(s)tR(s)=D(s)\cdot t\cdot R(s)$, using the biaction defined in Theorem \ref{correspa}.  Note further that $D(s)=D(D(s)sR(s))\leq D(D(s)tR(s))=D(s)D(tR(s))\leq D(s)$, so that $D(D(s)tR(s))=D(s)$, and similarly $R(D(s)tR(s))=R(s)$.  All of this shows that $\leq$ on all of $S$ may be recovered from knowledge of the biaction plus the restriction of $\leq$ to the hom-sets of $S$ viewed as a category.  There will therefore be a characterisation of Ehresmann-ordered categories as categories with Ehresmann biaction such that the hom-sets are all partially ordered and satisfy a handful of further properties that enforce precisely the Ehresmann-ordered category laws.  
We do not pursue the precise details further here.

\subsection{Some special cases} 

We have the following specialisations of Theorem \ref{corresp}.

\begin{thm}  \label{corresp4}
There is a category isomorphism between the category of ordered Ehresmann semigroups satisfying (OS4) and the category of Ehresmann-ordered categories satisfying (OC4).

Similarly, there is a category isomorphism between the category of ordered Ehresmann semigroups satisfying (OS7) and the category of Ehresmann-ordered categories satisfying (OC7).
\end{thm}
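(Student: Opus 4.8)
The plan is to leverage Theorem \ref{corresp} and the dictionary it provides, so that the work reduces to checking that the two extra laws transfer correctly under the two mutually inverse constructions $\C$ and $\Se$. By Theorem \ref{corresp} we already have a category isomorphism between ordered Ehresmann semigroups and Ehresmann-ordered categories, realised on objects by $S\mapsto \C(S)=(S,\circ,D,R,\leq)$ (same underlying set, same $D$, $R$, same order, with $\circ$ the restricted product of Definition \ref{pp}) and $C\mapsto \Se(C)=(C,\otimes,D,R,\leq)$. So it suffices to show: (i) $S$ satisfies (OS4) if and only if $\C(S)$ satisfies (OC4); (ii) $\Se(C)$ satisfies (OS4) if and only if $C$ satisfies (OC4); and the analogous two statements for (OS7)/(OC7). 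Since the underlying set, the partial order, and the operations $D,R$ are literally unchanged under both constructions, claims (i) and (ii) for (OS4)/(OC4) are immediate: (OS4) and (OC4) are word-for-word the same condition, referring only to $\leq$, $D$ and $R$, none of which changes. Thus the first isomorphism of the theorem is just the restriction of the isomorphism of Theorem \ref{corresp} to the relevant full subcategories, once we note that (OS4)$\Leftrightarrow$(OC4) is preserved in both directions.

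For the (OS7)/(OC7) case the only subtlety is that (OS7) speaks of the total semigroup product $st$ whereas (OC7) speaks of the partial product $b\circ c$, which is defined only when $R(b)=D(c)$. I would argue as follows. First I would observe that in an Ehresmann-ordered category, (OC7') always holds (it is part of Definition \ref{Ehrsordcat}), and I recorded in the text just before that definition that (OC4) and (OC7') together imply (OC7); so for Ehresmann-ordered categories, (OC7) is equivalent to (OC4) $\wedge$ (OC7'), i.e. to (OC4). Dually, by Proposition \ref{OS47}, for ordered Ehresmann semigroups (OS4) implies (OS7); and one should check the reverse direction is \emph{not} needed here — rather, I want to show (OS7) for $S$ is equivalent to (OC7) for $\C(S)$ directly. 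The cleanest route: show (a) if $S$ satisfies (OS7) then $\C(S)$ satisfies (OC7), and (b) if $\C(S)$ satisfies (OC7) then $S$ satisfies (OS7). For (a): given $a\le b\circ c$ in $\C(S)$, this means $a\le bc$ in $S$ with $R(b)=D(c)$; by (OS7) there are $b'\le b$, $c'\le c$ with $a=b'c'$; now replace $b'$ by $b'':=b'R(b)=b'D(c)$ and $c'$ by $c'':=D(c)c'$ — here one checks $b''\le b$, $c''\le c$ (using (OS6) and \ref{OS3}), that $b''c''=b'R(b)D(c)c'=b'c'=a$ using $R(b)=D(c)$ and the localisable identities, and that $R(b'')=D(c'')$, i.e. $b''\circ c''$ exists in $\C(S)$; so (OC7) holds. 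For (b): given $u\le st$ in $S$ with $s,t$ arbitrary, set $s_1:=sR(s)=s$ hmm — instead, use the pseudoproduct description: $st = s\otimes' t$ is not quite right since $\otimes$ is defined on categories; rather, in $S$ one has $st = (s\cdot e)\cdot (e\cdot t)$-style factorisation where $e=R(s)\wedge D(t)=R(s)D(t)$, concretely $st = (sR(s)D(t))\,(R(s)D(t)\,t)$, and $sR(s)D(t)\le s$, $R(s)D(t)t\le t$ with $R(sR(s)D(t))=R(s)D(t)=D(R(s)D(t)t)$, so $st$ equals a genuine $\circ$-product $b_0\circ c_0$ with $b_0\le s$, $c_0\le t$; then $u\le b_0\circ c_0$, apply (OC7) to get $b'\le b_0\le s$, $c'\le c_0\le t$ with $u=b'\circ c'=b'c'$, giving (OS7).

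I then conclude, as in the proof of Theorem \ref{corresp}, that the object correspondence restricts to the stated subcategories in both directions, and the morphism part requires nothing new: an ordered Ehresmann semigroup homomorphism is exactly an Ehresmann-ordered category morphism between the associated structures (Theorem \ref{corresp}), and neither (OS4)/(OC4) nor (OS7)/(OC7) involves morphisms, so the full subcategories on both sides inherit the isomorphism. I expect the main obstacle to be the bookkeeping in step (b) above — producing, from an arbitrary semigroup product $st$, an honest composable pair $b_0\circ c_0$ in $\C(S)$ equal to $st$ with $b_0\le s$ and $c_0\le t$, so that (OC7) can be applied — together with the symmetric care needed in step (a) to force $R(b'')=D(c'')$ without destroying $b''c''=a$. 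These are routine manipulations with the localisable identities \ref{L2}, \ref{L3}, \ref{L4} and (OS6), \ref{OS3}, but they are the only place where the gap between "total product" and "partial product" has to be bridged explicitly; everything else is inherited verbatim from Theorem \ref{corresp}.
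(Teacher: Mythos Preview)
Your overall strategy matches the paper's exactly: reduce to Theorem \ref{corresp} and check that (OS4)$\Leftrightarrow$(OC4) and (OS7)$\Leftrightarrow$(OC7) under the object correspondence. The (OS4)/(OC4) part is fine and is precisely what the paper does. Step (b) for (OS7)/(OC7) is also correct and is essentially the paper's argument, phrased on the semigroup side: your pair $b_0=sD(t)$, $c_0=R(s)D(t)\,t$ is exactly $s|e$ and $e|t$ for $e=R(s)\wedge D(t)$.

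However, step (a) contains a real gap. You take $b'\leq b$, $c'\leq c$ with $a=b'c'$ and set $b'':=b'R(b)=b'D(c)$ and $c'':=D(c)c'$. But since $b'\leq b$ gives $R(b')\leq R(b)$ in the semilattice $D(S)$, we have $b'R(b)=b'R(b')R(b)=b'R(b')=b'$; likewise $c'\leq c$ gives $D(c')\leq D(c)$, so $D(c)c'=D(c')c'=c'$. Thus your ``modification'' is the identity, $b''=b'$ and $c''=c'$, and the claimed equality $R(b'')=D(c'')$ becomes $R(b')=D(c')$, which you have no reason to know. The remedy is to tighten using the \emph{new} projections rather than the old one: set $s''=b'D(c')$ and $t''=R(b')c'$. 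Then $s''t''=b'D(c')R(b')c'=b'R(b')D(c')c'=b'c'=a$, while $R(s'')=R(R(b')D(c'))=R(b')D(c')=D(R(b')D(c'))=D(t'')$, so $s''\circ t''$ exists and equals $a$. This is exactly the adjustment the paper makes.
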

\begin{proof}
The laws (OS4) and (OC4) have identical forms and do not depend on the product (semigroup or category-theoretic), eatablishing the first claim.

Suppose the ordered Ehresmann semigroup $(S,\cdot,D,R,\leq)$ satisfies (OS7).  Then suppose that in the Ehresmann-ordered category $(S,\circ,\leq)$, $u\leq s\circ t$.  Then $u\leq st$, so there are $s'\leq s,t'\leq t$ such that $u=s't'$.  Let $s''=s'D(t'),t''=R(s')t'$; then $s''t''=s't'=u$ and 
$$R(s'')=R(s'D(t'))=R(R(s')D(t'))=R(s')D(t')=D(R(s')D(t'))=D(R(s')t')=D(t''),$$ 
so $u=s''\circ t''$.  So the Ehresmann-ordered category $(S,\circ,\leq)$ satisfies (OC7).

Conversely, suppose the Ehresmann-ordered category $(C,\circ,D,R,\leq)$ satisfies (OC7).  Further suppose that in the ordered Ehresmann semigroup $(C,\otimes,D,R,\leq)$, $u\leq s\otimes t$.  Then letting $e=R(s)\wedge D(t)$, $(s|e)\circ (e|t)$ exists and we have that $u\leq (s|e)\circ (e|t)$, so there are $s',t'\in C$ such that $s'\leq s|e\leq s$ and $t'\leq e|t\leq t$, with $u=s'\circ t'=s'\otimes t'$.   Hence the ordered Ehresmann semigroup $(C,\otimes,D,R,\leq)$ satisfies (OS7).
\end{proof}

From the above together with Propositions \ref{deBarros} and \ref{arrows}, we have the following.
 
\begin{cor}  \label{corresp4cor}
There is a category isomorphism between the category of de Barros semigroups and the category of Ehresmann-ordered categories satisfying (OC4).
\end{cor}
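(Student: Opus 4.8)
The plan is to obtain the claimed isomorphism as the composite of two isomorphisms already established. First I would invoke Proposition \ref{deBarros} to identify objects: the assignment $(S,\cdot,D,R)\mapsto (S,\cdot,D,R,\leq_e)$ sends a de Barros semigroup to an ordered Ehresmann semigroup satisfying (OS4), and conversely every ordered Ehresmann semigroup $(S,\cdot,D,R,\leq)$ satisfying (OS4) is of this form for a unique de Barros semigroup, since (OS4) forces $(S,\cdot,D,R)$ to be de Barros and $\leq$ to coincide with $\leq_e$. On morphisms, Proposition \ref{arrows} is exactly what is needed: every Ehresmann semigroup homomorphism between de Barros semigroups automatically preserves $\leq_e$, hence is an ordered Ehresmann semigroup homomorphism; and conversely an ordered Ehresmann semigroup homomorphism between ordered Ehresmann semigroups satisfying (OS4) is in particular an Ehresmann semigroup homomorphism of the underlying de Barros semigroups. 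Thus attaching $\leq_e$ and forgetting the order are mutually inverse functors, giving a category isomorphism between the category of de Barros semigroups (with Ehresmann semigroup homomorphisms) and the category of ordered Ehresmann semigroups satisfying (OS4) (with ordered Ehresmann semigroup homomorphisms) — this is precisely the remark recorded immediately after Proposition \ref{arrows}.

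Second, Theorem \ref{corresp4} supplies a category isomorphism between the category of ordered Ehresmann semigroups satisfying (OS4) and the category of Ehresmann-ordered categories satisfying (OC4), implemented by the constructions $\C(-)$ and $\Se(-)$ of Theorem \ref{corresp}. Composing the two isomorphisms gives the desired one. Explicitly, a de Barros semigroup $(S,\cdot,D,R)$ is carried to $\C(S)=(S,\circ,D,R,\leq_e)$, an Ehresmann-ordered category satisfying (OC4); and an Ehresmann-ordered category $(C,\circ,D,R,\leq)$ satisfying (OC4) is carried to $\Se(C)=(C,\otimes,D,R)$, which satisfies (OS4) and hence, by Proposition \ref{deBarros}, is de Barros with $\leq$ equal to its $\leq_e$.

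There is essentially no hard step; the only point requiring care is the bookkeeping of morphism classes — checking that the three categories really carry the arrows claimed, and that Proposition \ref{arrows} is exactly what allows one to pass between ``Ehresmann semigroup homomorphism'' and ``order-preserving Ehresmann semigroup homomorphism'' on de Barros semigroups without altering the arrow set. Once that is observed, the corollary follows at once by chaining the two cited isomorphisms.
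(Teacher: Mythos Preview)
Your proposal is correct and is essentially the same argument as the paper's: the corollary is deduced directly from Theorem \ref{corresp4} together with Propositions \ref{deBarros} and \ref{arrows}, composing the identification of de Barros semigroups with ordered Ehresmann semigroups satisfying (OS4) with the category isomorphism of Theorem \ref{corresp4}. You have simply spelled out the details more fully than the paper does.
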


In \cite{law1}, Lawson considers those Ehresmann categories in which $\leq_e=\leq_l\circ \leq_r$ satisfies (OC3): he calls them {\em de Barros} categories, and shows that they correspond to de Barros semigroups, as a special case of his main theorem describing the correspondence of Ehresmann semigroups and categories.  Lawson characterises de Barros categories as Ehresmann categories $(C,\circ, D,R,\leq_l,\leq_r)$ such that $(C,\circ, D,R,\leq_e)$ is a totally, regularly ordered category, that is, it is an Ehresmann-ordered category satisfying (OC4), and hence also (OC7).  

From the first part of Theorem \ref{corresp4}, we immediately have the following.

\begin{cor}  
Every Ehresmann-ordered category $(C,\circ, D,R,\leq)$ satisfying (OC4) (that is, every totally, regularly ordered category) is such that $(C,\circ,D,R,\leq_l,\leq_r)$ is de Barros in the sense of \cite{law1}, if we define $s\leq_l t$ providing $s=D(s)|t$ and $s\leq_r t$ providing $s=t|R(s)$, and moreover $\leq$ equals $\leq_e$.
\end{cor}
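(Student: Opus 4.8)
The plan is to deduce everything from the category isomorphism of Theorem~\ref{corresp}, Proposition~\ref{deBarros}, and Lawson's characterisation of de Barros categories recalled just above the statement, rather than checking the Ehresmann category axioms for $(C,\circ,D,R,\leq_l,\leq_r)$ by hand.

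First I would apply Proposition~\ref{corresp1} to obtain from $C$ the ordered Ehresmann semigroup $\Se(C)=(C,\otimes,D,R,\leq)$. Since $\Se(C)$ has exactly the same underlying poset and the same unary operations $D,R$ as $C$, the law (OS4) for $\Se(C)$ is verbatim the law (OC4) for $C$, hence holds; by Proposition~\ref{deBarros}, $\Se(C)$ is then a de Barros semigroup whose given order $\leq$ coincides with its algebraic order $\leq_e$, that is $\leq\;=\;\leq_l^{\Se(C)}\circ\leq_r^{\Se(C)}$ (the two one-sided orders permuting, as always in an Ehresmann semigroup).

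The second step is to identify the structure that Lawson's construction in \cite{law1} attaches to the Ehresmann semigroup $\Se(C)$ with the structure named in the statement. Two short computations, using (OC6), (OCI) and the formula for the induced biaction from Theorem~\ref{correspa}, accomplish this. (i) When $R(s)=D(t)$ we have $s|R(s)=s$ and $D(t)|t=t$ by (OC6), so the partial product on $\Se(C)$ supplied by Definition~\ref{pp} is $s\otimes t=(s|R(s))\circ(D(t)|t)=s\circ t$; thus it is exactly the original $\circ$. (ii) For a projection $e$ one computes $e\otimes t=(e|(e\wedge D(t)))\circ((e\wedge D(t))|t)=(e\wedge D(t))\circ(e\cdot t)=e\cdot t=(e\wedge D(t))|t$, using (OCI) to evaluate the corestriction of a projection and Theorem~\ref{correspa} for the last two equalities; hence $s=D(s)\otimes t$ forces $D(s)=D(s)\wedge D(t)$, and so the relation $s=D(s)\otimes t$ defining $\leq_l^{\Se(C)}$ holds precisely when $D(s)\leq D(t)$ and $s=D(s)|t$ --- which is the relation $\leq_l$ of the statement (and dually for $\leq_r$). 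Consequently the Ehresmann category that \cite{law1} produces from the Ehresmann semigroup $\Se(C)$ is literally $(C,\circ,D,R,\leq_l,\leq_r)$, so this is an Ehresmann category, and moreover $\leq_e:=\leq_l\circ\leq_r$ agrees with $\leq_e^{\Se(C)}=\leq$.

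Finally, since $\leq_e=\leq$ and $(C,\circ,D,R,\leq)$ is by hypothesis an Ehresmann-ordered category satisfying (OC4) --- i.e. a totally, regularly ordered category --- the Ehresmann category $(C,\circ,D,R,\leq_l,\leq_r)$ satisfies the condition that $(C,\circ,D,R,\leq_e)$ be totally, regularly ordered, which is precisely Lawson's definition of a de Barros category; and the same line gives the asserted equality $\leq=\leq_e$. The one point needing care --- the main obstacle, such as it is --- is step (ii): the notation $e|x$ used in the statement to define $\leq_l$ presupposes $e\leq D(x)$, so one must check that the equation $s=D(s)\otimes t$ forces $D(s)\leq D(t)$ in the first place, so that the two descriptions of $\leq_l$ genuinely coincide; everything else is a direct application of the cited results.
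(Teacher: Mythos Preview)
Your proof is correct and follows essentially the same route as the paper, which simply asserts the corollary as immediate from the first part of Theorem~\ref{corresp4} together with Lawson's characterisation of de Barros categories recalled just before the statement. You have usefully spelled out what the paper leaves implicit, in particular the verification in step~(ii) that the relation $s=D(s)|t$ (with $D(s)\leq D(t)$) defining $\leq_l$ in the statement coincides with the relation $s=D(s)\otimes t$ defining $\leq_l$ on the Ehresmann semigroup $\Se(C)$, so that the Ehresmann category Lawson builds from $\Se(C)$ really is $(C,\circ,D,R,\leq_l,\leq_r)$.
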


So de Barros categories in the sense of \cite{law1} are nothing but arbitrary totally, regularly ordered categories $(C,\circ,D,R,\leq)$ in which $\leq_l,\leq_r$ are defined as in the above corollary, and $\leq$ equals $\leq_l\circ \leq_r$.

Corollary \ref{corresp4cor} has further specialisations.  In \cite{law1}, the author specialised his result relating de Barros semigroups and categories to one involving (two-sided) restriction semigroups equipped with their natural order $\leq_e$ and what he called {\em inductive$_1$ categories}: these are $\Omega$-structured categories satisfying (OC8) (hence also (OC4), (OC7) and (OCI) by Proposition 2.8 in \cite{law1}, or equivalently by Lemma \ref{oc}, (OC4A), (OC4B) and (OC6)), and such that $(D(C),\leq)$ is a meet-semilattice; hence they are nothing but Ehresmann-ordered categories satisfying (OC8).  This result can also be obtained from our results, using the fact that (OC4A) and/or (OC4B) hold in an Ehresmann-ordered category if and only if (OS4A) and/or (OS4B) hold in the corresponding ordered Ehresmann semigroup (that is, it is a one or two-sided restriction semigroup).  Together with Proposition \ref{rest}, this gives the following.

\begin{cor}
There is a category isomorphism between the category of restriction semigroups and the category of inductive$_1$ categories.
\end{cor}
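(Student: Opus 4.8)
The plan is to assemble the desired correspondence from the pieces already established, treating the claim as essentially a translation exercise between the semigroup side and the category side. First I would recall that Proposition \ref{rest} tells us precisely which ordered Ehresmann semigroups are restriction semigroups: those satisfying both (OS4A) and (OS4B), and in each such case the order is forced to be $\leq_e$. On the category side, inductive$_1$ categories have just been identified in the surrounding discussion as exactly the Ehresmann-ordered categories satisfying (OC8), which by Proposition \ref{oc} is the same as satisfying (OC4A), (OC4B) and (OC6) — and (OC6) is already built into the definition of an Ehresmann-ordered category. So the task reduces to matching (OS4A)+(OS4B) with (OC4A)+(OC4B) across the isomorphism of Theorem \ref{corresp}.

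The key step is therefore to verify that, under the mutually inverse constructions $\C$ and $\Se$ of Theorems \ref{corresp0} and \ref{corresp1}, property (OC4A) of $\C(S)$ corresponds to property (OS4A) of $S$, and similarly for the ``B'' versions. Since (OC4A) and (OS4A) have literally the same form — ``$s\le t$ and $D(s)=D(t)$ imply $s=t$'' — and neither refers to the (partial or total) product, this matching is immediate: the underlying set, the order, and the operations $D,R$ are unchanged when passing between $(S,\cdot,D,R,\le)$ and $(S,\circ,D,R,\le)$, so $\C(S)$ satisfies (OC4A) if and only if $S$ satisfies (OS4A), and dually for (OC4B). Combining this with Theorem \ref{corresp} restricted to the relevant subcategories yields a category isomorphism between ordered Ehresmann semigroups satisfying (OS4A) and (OS4B) and Ehresmann-ordered categories satisfying (OC4A) and (OC4B), i.e. satisfying (OC8), i.e. inductive$_1$ categories.

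Finally I would invoke Proposition \ref{rest} to identify the left-hand class with restriction semigroups: an ordered Ehresmann semigroup satisfies (OS4A) and (OS4B) precisely when it is a restriction semigroup with $\le$ equal to $\le_e$, and conversely every restriction semigroup equipped with $\le_e$ is such an ordered Ehresmann semigroup. Since a restriction semigroup carries a unique Ehresmann order (namely $\le_e$, by the ``conversely'' part of Proposition \ref{rest}), the objects of the two categories are in bijection, and the morphisms agree because ordered Ehresmann semigroup homomorphisms between such semigroups are exactly Ehresmann semigroup homomorphisms (order-preservation being automatic, as $\le_e$ is algebraically definable — cf. Proposition \ref{arrows}). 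This gives the stated category isomorphism between restriction semigroups and inductive$_1$ categories.

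I do not expect a genuine obstacle here: every ingredient is in place, and the only thing to watch is bookkeeping — making sure that ``inductive$_1$ category'' is being used with the characterisation just given (Ehresmann-ordered category satisfying (OC8)), that the morphism classes on both sides are the restrictions of those in Theorem \ref{corresp} and need no further adjustment, and that one correctly cites Proposition \ref{oc} for the equivalence of (OC8) with (OC4A)+(OC4B)+(OC6). The mildest subtlety is confirming that no extra morphisms appear on the restriction-semigroup side: since the Ehresmann order on a restriction semigroup is canonical, ``ordered Ehresmann semigroup homomorphism'' collapses to ``Ehresmann semigroup homomorphism'', so the category of restriction semigroups in the usual sense is literally the relevant subcategory of ordered Ehresmann semigroups.
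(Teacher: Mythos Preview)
Your proposal is correct and follows essentially the same approach as the paper: the paper's argument (given in the discussion immediately preceding the corollary) also rests on the observation that (OC4A)/(OC4B) and (OS4A)/(OS4B) have identical forms not involving the product, so they correspond under Theorem \ref{corresp}, and then Proposition \ref{rest} identifies the semigroup side with restriction semigroups while Proposition \ref{oc} identifies the category side with inductive$_1$ categories. Your treatment of the morphism side via Proposition \ref{arrows} is slightly more explicit than the paper's, but the underlying idea is the same.
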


Also a consequence is the following result for left restriction semigroups with range; there is an easy dual.

\begin{cor}
There is a category isomorphism between the category of left restriction semigroups with range and the category of Ehresmann-ordered categories satisfying (OC4A).
\end{cor}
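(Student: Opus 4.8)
The plan is to assemble the result from three ingredients already in hand: the main isomorphism of Theorem~\ref{corresp}, the correspondence of Proposition~\ref{rest} identifying left restriction semigroups with range among ordered Ehresmann semigroups, and the observation — used already for (OC4)/(OS4) in Theorem~\ref{corresp4} — that (OC4A) and (OS4A) transfer freely across the constructions $\C$ and $\Se$.

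First I would record that under the category isomorphism of Theorem~\ref{corresp}, an ordered Ehresmann semigroup $(S,\cdot,D,R,\leq)$ satisfies (OS4A) if and only if the corresponding Ehresmann-ordered category $\C(S)=(S,\circ,D,R,\leq)$ satisfies (OC4A), and dually $\Se(C)$ satisfies (OS4A) whenever $C$ satisfies (OC4A). This is immediate: (OS4A) and (OC4A) are literally the same condition on the quadruple consisting of the underlying set, the order $\leq$, and the operations $D,R$ — they do not mention the (semigroup or partial) product at all — and these data coincide for $S$ and for $\C(S)$. Hence Theorem~\ref{corresp} restricts to a category isomorphism between the category of ordered Ehresmann semigroups satisfying (OS4A) and the category of Ehresmann-ordered categories satisfying (OC4A).

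Next I would identify the former with the category of left restriction semigroups with range. On objects this is exactly Proposition~\ref{rest}: a left restriction semigroup with range $(S,\cdot,D,R)$ becomes an ordered Ehresmann semigroup satisfying (OS4A) upon equipping it with $\leq_e$, and conversely every ordered Ehresmann semigroup satisfying (OS4A) is a left restriction semigroup with range whose order equals $\leq_e$, so the two assignments are mutually inverse bijections on objects. On morphisms: a left restriction semigroup with range is de Barros (as noted before Proposition~\ref{deBeq}), so by Proposition~\ref{arrows} every Ehresmann semigroup homomorphism between two such semigroups automatically preserves $\leq_e$ and is therefore an ordered Ehresmann semigroup homomorphism, while conversely an ordered Ehresmann semigroup homomorphism is in particular an Ehresmann semigroup homomorphism; since the order on each object is the algebraically determined $\leq_e$, these are inverse bijections on hom-sets. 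Thus the two categories are isomorphic, exactly as in the remark following Proposition~\ref{arrows}. Composing with the isomorphism of the previous paragraph gives the desired isomorphism between left restriction semigroups with range and Ehresmann-ordered categories satisfying (OC4A).

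The only point needing any care — and the one I would flag as the main, if modest, obstacle — is this morphism-matching in the middle step: one must check that replacing the signature $(\cdot,D,R)$ by $(\cdot,D,R,\leq_e)$ and back does not alter the hom-sets, which is precisely what Proposition~\ref{arrows} supplies, its applicability resting on the de Barros property. Everything else is a transfer of conditions not involving the product, just as in Theorem~\ref{corresp4}. The "easy dual" for right restriction semigroups with domain follows by the identical argument with (OS4B)/(OC4B) in place of (OS4A)/(OC4A).
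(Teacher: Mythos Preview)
Your proposal is correct and follows essentially the same route as the paper: the paper states this corollary as an immediate consequence of the fact that (OC4A) and (OS4A) coincide across the correspondence of Theorem~\ref{corresp} (since neither mentions the product), together with Proposition~\ref{rest} and the remark after Proposition~\ref{arrows} identifying the categories of de Barros semigroups and of ordered Ehresmann semigroups satisfying (OS4) (the same argument handling the (OS4A) subcase). Your explicit attention to the morphism-matching via Proposition~\ref{arrows} spells out what the paper leaves implicit.
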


We may further specialise this case.

\begin{cor}
There is a category isomorphism between the category of functional left restriction semigroups with range and the category of Ehresmann-ordered categories satisfying (OC4A) in which every element is an epimorphism.
\end{cor}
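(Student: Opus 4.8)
The plan is to obtain the statement as a refinement of the category isomorphism established in the preceding corollary between left restriction semigroups with range and Ehresmann-ordered categories satisfying (OC4A). Since in that isomorphism (as throughout this section) the morphism classes on the two sides coincide, it suffices to check that the object correspondence $S\mapsto\C(S)$ of Proposition \ref{corresp0} identifies the \emph{functional} left restriction semigroups with range precisely with those Ehresmann-ordered categories satisfying (OC4A) in which every element is an epimorphism; the restricted correspondence is then automatically a category isomorphism, its inverse being the restriction of the construction of Proposition \ref{corresp1}. Throughout, ``epimorphism'' is meant in the category $(S,\circ,D,R)$: an element $p$ is an epimorphism if $p\circ a=p\circ b$ together with $D(a)=D(b)=R(p)$ forces $a=b$.

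First I would dispose of the easy direction. Suppose $(S,\cdot,D,R)$ is a functional left restriction semigroup with range; it satisfies (OC4A) in $\C(S)$ by the preceding corollary. Let $s\in\C(S)$, and let $t,u\in S$ with $D(t)=D(u)=R(s)$ and $s\circ t=s\circ u$. Then $st=su$ in $S$, so functionality gives $R(s)t=R(s)u$; but $R(s)=D(t)$ forces $R(s)t=D(t)t=t$, and similarly $R(s)u=u$, so $t=u$. Hence every element of $\C(S)$ is an epimorphism.

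The substance is the converse. Suppose every element of the Ehresmann-ordered category $\C(S)$ (with (OC4A), so that $S$ is a left restriction semigroup with range by the preceding corollary) is an epimorphism, and suppose $st=su$ in $S$. From the defining law $sD(t)=D(st)s$ and $D(st)=D(su)$ we obtain $sD(t)=D(st)s=D(su)s=sD(u)$; write $p$ for this common element. A short computation with the localisable laws gives $R(p)=R(sD(t))=R(s)D(t)$, and applying $R$ to $sD(t)=sD(u)$ yields $R(s)D(t)=R(s)D(u)$; likewise $D(R(s)t)=R(s)D(t)=R(p)$ and $D(R(s)u)=R(s)D(u)=R(p)$. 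Thus $p\circ(R(s)t)$ and $p\circ(R(s)u)$ are both defined category products, and using that projections commute together with $sR(s)=s$ and $D(t)t=t$ one checks $p\circ(R(s)t)=sD(t)R(s)t=sD(t)t=st$ and similarly $p\circ(R(s)u)=su$. From $st=su$ we therefore get $p\circ(R(s)t)=p\circ(R(s)u)$ with both second factors having domain $R(p)$, so since $p$ is an epimorphism, $R(s)t=R(s)u$. Hence $S$ is functional. To close the loop: if $C$ is an Ehresmann-ordered category satisfying (OC4A) in which every element is an epimorphism, then $\Se(C)$ is a left restriction semigroup with range and $\C(\Se(C))=C$ by Theorem \ref{corresp}, so the converse just proved shows $\Se(C)$ is functional; together with the forward direction this establishes the claimed restriction of the object correspondence, and hence the category isomorphism.

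The main obstacle is the converse direction, and specifically the device of rewriting the arbitrary semigroup product $st$ as the \emph{category} product $(sD(t))\circ(R(s)t)$: this is what lets the hypothesis $st=su$ be exploited, for via the left restriction identity it forces the equality $sD(t)=sD(u)$ of the two ``first factors'', so that the epimorphism hypothesis on $p=sD(t)$ can be applied to cancel it and extract $R(s)t=R(s)u$. The remaining verifications — that the displayed composites are defined, that they recompute to $st$ and $su$, and the bookkeeping of the relevant domains and ranges — are routine manipulations with the Ehresmann laws and the identity $sD(t)=D(st)s$.
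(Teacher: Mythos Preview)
Your proof is correct and follows essentially the same strategy as the paper: both factorize the semigroup product $st$ as the category product $(sD(t))\circ(R(s)t)$ (equivalently $(s|e)\circ(e|t)$ with $e=R(s)\wedge D(t)$) and cancel the common first factor using the epimorphism hypothesis. Your justification that the first factors agree (via $sD(t)=D(st)s=D(su)s=sD(u)$, whence $R(s)\wedge D(t)=R(s)\wedge D(u)$) is in fact more explicit than the paper's, which asserts this equality with the bare remark ``since $D(t)=D(u)$''.
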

\begin{proof}
Suppose $(S,\cdot,D,R)$ is a functional left restriction semigroup with range.  Then in the category $(S,\circ,D,R)$, if $s\circ t=s\circ u$ then $st=su$ so $R(s)t=R(s)u$ and so $t=u$ since $R(s)=D(t)=D(u)$.

Conversely, suppose $C$ is an Ehresmann-ordered category satisfying (OC4A) in which every element is an epimorphism.  Then for $s,t,u\in C$, if $s\otimes t=s\otimes u$ then $(s|e)\circ (e|t)=(s|e)\circ (e|u)$ where $e=R(s)\wedge D(t)=R(s)\wedge D(u)$ since $D(t)=D(u)$, so $e|t=e|u$, and so $e\otimes t=e\otimes u$, or $R(s)\otimes t=(R(s)\otimes D(t))\otimes t=(R(s)\otimes D(u))\otimes u=R(s)\otimes u$.  So $(C,\otimes,D,R)$ is a functional left restriction semigroup with range.
\end{proof}

\section{Open Problems}

Two subvarieties of Ehresmann semigroups are those of left restriction semigroups with range and right restriction semigroups with domain, the meet being the variety of restriction semigroups.  Each of these two varieties is contained in the variety of de Barros semigroups, whence so is their join, but it is not known if this join equals the class of de Barros semigroups.

\noindent T. Stokes\\
Department of Mathematics\\ 
The University of Waikato\\
Hamilton, New Zealand\\
phone: +64 7 8384131\\
email: tim.stokes@waikato.ac.nz

\end{document}